\documentclass{amsart}

\usepackage{color,amssymb,amsmath,amsthm,tikz,pb-diagram,mathtools}
\usepackage[sort, numbers]{natbib}
\usepackage{mathrsfs}

\usepackage{enumerate, color}
\usepackage[all]{xypic}  
\usepackage{forest}
\title[How to take cats together]{How to take cats together\\
}
\keywords{Mereology; Posets; Universalism; Order-completion}
\subjclass[2010]{06A06; 03G10.}

\author[I. Mozo~Carollo]{Imanol Mozo~Carollo}
\address[Imanol Mozo~Carollo]{Department of Applied Economics\\ University of the Basque Country UPV/EHU\\ 20018 Donostia\\ Spain}
\email{imanol.mozo@ehu.eus}
\thanks{The author acknowledges the financial support of the Basque Government (grant IT1483-22).}


\newtheorem{Theorem}{Theorem}[section]
\newtheorem{Proposition}[Theorem]{Proposition}
\newtheorem{Lemma}[Theorem]{Lemma}
\newtheorem{Corollary}[Theorem]{Corollary}
\theoremstyle{definition}
\newtheorem{Definition}[Theorem]{Definition}
\newtheorem{Definitions}[Theorem]{Definitions}

\newtheorem{Example}[Theorem]{Example}





\newtheorem{remark}[Theorem]{Remark}

\def\N{{\mathbb{N}}}
\def\R{{\mathbb{R}}}

\newcommand{\tbigcup}{\mathop{\textstyle \bigcup }}

\newcommand{\twoheaddownarrow}{{\rlap{\rlap{$\ $}\raise .25ex\hbox{$\downarrow$}}\raise-.25ex\hbox{$\downarrow$}}}
\newcommand{\twoheaduparrow}{{\rlap{\rlap{$\ $}\raise .25ex\hbox{$\uparrow$}}\raise-.25ex\hbox{$\uparrow$}}}

\theoremstyle{definition}
\theoremstyle{remark}


\begin{document}

\begin{abstract} The aim of this paper is to give mathematical account of an argument of David Lewis in \emph{Parts of Classes} in defense of universalism in mereology. Specifically we study how to extend models of Core Mereology (following Achille Varzi's terminology) to models in which every collection of parts can be composed into another part. We focus on the two main definitions for mereological compositions and show that any model can be extended to satisfy universalism. We explore which are the ``most economical'' ways of extending models under various conditions. Remarkably, we show that if the principle of Strong Supplementation is assumed, there is a unique mereological completion, up to isomorphism.
\end{abstract}

\maketitle

\section{Introduction}
Universalism is the thesis that claims that mereological compositions of any nonempty collection of parts of a whole exist. By mereological composition of a collection of parts we mean a part of the whole which is entirely composed of the parts of that collection---we keep the term \emph{composition} for this intuitive idea and will use the terms \emph{fusion} and \emph{sum} for specific formal definitions. As many other mereological principles, universalism, although widely accepted, is not completely unquestioned (see \cite{Bohn2009,Bohn2009a,Chisholm2014, Mormann2014} for example). One of the main objections to closure principles like universalism is that they may enlarge the collection of objects that are allowed in the domain of a mereological theory, introducing in the process compositions that may be counterintuitive. However there are reasons to believe that it does not actually require a further ontological commitment. After all, a composition may not be more than the parts that it is composed from. In words of David Lewis in  (\cite[81--82]{Lewis1991a}),
\begin{quote}
To be sure, if we accept mereology, we are committed to the existence of all manner of mereological fusions.  But given a prior commitment to cats, say, a commitment to cat-fusions is not a further commitment.  The fusion is nothing over and above the cats that compose it.  It just is them. They just are it.  Take them together or take them separately, the cats are the same portion of Reality either way.  Commit yourself to their existence all together or one at a time, it’s the same commitment either way.
\end{quote}
The main goal of this paper is to analyze this argument from a mathematical point of view. We put the focus mainly on the mereological implications, rather than on its ontological consequences. We show how models of mereological theories can be extended into models that satisfy universalism. Essentially we will discuss how one can consistently add compositions for those collections of parts that lack one. Of course, these newly added parts, as elements of mathematical models, can be understood as fictitious parts. Or simply as mere names or labels for those collections for which there is no composition.   However, merely naming those collections is not enough if we are interested in the resulting mereological structure. We need to extend also the parthood relation. We need to specify how those newly added parts relate to each other and how they relate to other elements of the model. As we will see, this task is far from trivial in most of the cases. See French's work \cite{French2016} for a more syntactical approach to an argument in favor of the ontological innocence of mereology claimed by Lewis.

One can find alternative definitions for mereological compositions in the literature. In this paper, we will focus on the two main candidates and, following the terminology in \cite{Gruszczynski2013}, we will call them \emph{fusions} and \emph{sums}. For any open formula $\varphi$ with just a free variable $z$, we define:
\begin{itemize}
\item \emph{Fusion:}
\[
F_\varphi x:\equiv \forall y(y\circ x\leftrightarrow \exists z(\varphi\wedge y\circ z))
\]
(read as ``$x$ is a fusion of the $\varphi$-ers'');
\item \emph{Sum:}
\[
S_\varphi x:\equiv \forall z(\varphi\rightarrow z\leq x)\wedge \forall y(y\leq x\rightarrow\exists z(\varphi \wedge y\circ z))
\]
(read as ``$x$ is a sum of the $\varphi$-ers'').
\end{itemize}
We will keep the terms \emph{fusion} and \emph{sum} for these technical formal definitions (and for some some variations) and use the term \emph{composition} for the general notion of ``putting parts together" and as a placeholder for any concrete definition of this notion in a formal theory (sums, fusions or any other).

After recalling the necessary background on Classical Mereology and its models, and partially ordered sets in \S2, we discuss in \S3 which conditions are necessary in order to consider an order extension of a poset a mereological completion and we propose definitions for \emph{fusion-} and \emph{sum-completions}.

In \S4 we focus on the case of separative posets, which are precisely the models of Core Axioms + Strong Supplementation. In particular we show  that if we restrict ourselves to models that satisfy Strong Supplementation (that is to say that also the mereological completion has to satisfy the Strong Supplementation axiom) then there is, up to isomorphism, a unique mereological completion in the sense presented in \S3. That is, there is unique a fusion-completion and a unique sum-completion and both coincide. Interestingly, this construction was already known in the context of forcing techniques and Boolean-valued models in set theory \cite{Jech2003}. In addition we provide an alternative description of this completion inspired in the notion of fusions.

We show in \S5 that any poset $P$ has a sum-completion $S(P)$ and that, in the case of weakly supplemented posets, $S(P)$ is a quotient of any other sum-completion of $P$. This shows that, in a particular sense, weakly supplement posets have a least sum-completion. In \S6 we provide two alternative fusion-completions for arbitrary posets showing that any poset has a fusion-completion. We show in \S7 that in the case of atomic posets $P$---models of Atomic Core Mereology---a slight modification of one of the completions introduced in \S6 provides a fusion-completion of $P$ which is the smallest one in the sense that it can be injectively mapped while preserving its fusions into any other fusion-completion of $P$. Finally, in \S8, we discuss the situation where minimal upper bounds are taken as mereological compositions. In this context we discuss how lattice theory offers a perfect candidate for the corresponding mereological completion: the Dedekind-MacNeille completion.

\section{Background}
\subsection{Classical Mereology}
We begin by recalling the main concepts of Classical Mereology. Mereology is usually formalized in first order predicate logic with identity  and a distinguished binary predicate constant, $\leq$, meant to represent the parthood relation. Before we introduce axioms, we begin with some definitions:
\begin{itemize}
\item A \emph{proper part} $y$ is a part $x$ distinct from the $y$:
\[
x<y:\equiv x\leq y\wedge x\neq y
\]
\item Two parts \emph{overlap} if they share a common part:
\[
x\circ y:\equiv\exists z(z\leq x\wedge z\leq y)
\]
\item Two parts are \emph{disjoint} if they do not overlap:
\[
x\wr y:\equiv \neg x\circ y
\]
\end{itemize}
The common starting point for most formal mereological theories is to assume  that  parthood $\leq$ is a partial order, that is, a reflexive, antisymmetric and transitive relation. Accordingly we start with the following system of axioms:
\begin{enumerate}
\item[(P.1)] $\forall x\, x\leq x$\hfill \emph{Reflexivity}
\item[(P.2)] $\forall x\forall y(x\leq y\wedge y\leq x\rightarrow x=y)$\hfill \emph{Antisymmetry}
\item[(P.3)] $\forall x\forall y\forall z(x\leq y\wedge y\leq z\rightarrow x\leq z)$\hfill \emph{Transitivity}
\end{enumerate} 
Following Varzi in \cite{Varzi} we will call this theory \emph{Core Mereology} and these axioms \emph{Core Axioms}.

The following axiom schemata are associated with the notions of fusion and sum:
\begin{enumerate}
\item[(C.1)] $\exists z \varphi\rightarrow \exists x\, F_\varphi x$\hfill \emph{Existence of Fusions}
\item[(C.2)]$\exists z \varphi\rightarrow \exists x\, S_\varphi x$\hfill \emph{Existence of Sums}
\end{enumerate}
We will also be interested in two classical decomposition principles:
\begin{enumerate}
\item[(D.1)] $x<y\rightarrow\exists z(z\leq y\, \wedge\, x\wr z)$\hfill \emph{Weak Supplementation}
\item[(D.2)] $y\not\leq x\rightarrow \exists z(z\leq y \, \wedge\, x\wr z)$\hfill \emph{Strong Supplementation}
\end{enumerate} 
We can combine these  axioms in  two
different ways to obtain well-known axiomatizations of Classical Mereology  \cite{Hovda2009a}: 
\begin{itemize}
\item  (P.2), (P.3), (D.2) and (C.1) (including Reflexivity as it is often done is redundant \cite{PIETRUSZCZAK2018a, Varzi2019}); 
\item  (P.3), (D.1) and (C.2);
\end{itemize}
Classical Mereology is closely related to Boolean algebras. The first one to point this out was Tarski \cite{Tarski1935} himself and Grzegorczyk also analyzed the phenomenon \cite{Grzegorczyk1955}.
Adding a bottom element to any model of Classical Mereology  yields a Boolean algebra but, as Pontow and Schubert showed in \cite{Pontow2006}, assuming the consistency of ZFC, those Boolean algebras are not necessarily \emph{complete} Boolean algebras. From an algebraic point of view, and possibly even from some philosophical standpoints, closing Classical Mereology under arbitrary composition is remarkably convenient. In order to be able to quantify over subsets of the domain, we will consider second-order versions of fusions and sums. Let $A$ be a nonempty subset of the domain and consider the following definitions:
\begin{itemize}
\item \emph{Second-order fusion}
\[
F_A x:\equiv \forall y(y\circ x\leftrightarrow \exists z(z\in  A\wedge y\circ z))
\]
(read as ``$x$ is a fusion of $A$'');
\item \emph{Second-order sum}
\[
S_A x:\equiv \forall z(z\in A\rightarrow z\leq x)\wedge \forall y(y\leq x\rightarrow\exists z(z\in A \wedge y\circ z))
\]
(read as ``$x$ is a sum of $A$'').
\end{itemize}
Analogously to the first-order version, associated to these definitions we have the corresponding existence axiom schemata:
\begin{enumerate}
\item[(C.1+)] $\exists z (z\in A)\rightarrow \exists x\, F_A x$\hfill \emph{Existence of All Fusions}
\item[(C.2+)]$\exists z (z\in A)\rightarrow \exists x\, S_Ax$\hfill \emph{Existence of All Sums}
\end{enumerate}
We will call the system consisting of axioms (P.2), (P.3), (D.2) and (C.1+) \emph{Closed Classical Mereology}. Models of Closed Classical Mereology are precisely complete Boolean algebras with the bottom element removed \cite{Pontow2006}.

Strong supplementation is equivalent to the following condition (see \cite{Pontow2006}):
\begin{enumerate}
\item[(D.2$^\prime$)] $\forall z(z\leq x\rightarrow z\circ y)\rightarrow x\leq y$. \end{enumerate}
Note that, given transitivity, all (second-order) sums are (second-order) fusions. Furthermore, under the presence of strong supplementation, (second-order) sums and (second-order) fusions coincide.

Another important principle in Mereology concerns \emph{atomism}. An \emph{atom} is a part with no proper parts:
\[
Ax:\equiv \neg\exists y (y<x)
\]
The idea that everything is ultimately composed of atoms is usually expressed as follows:
\begin{enumerate}
\item[(A)]$\forall x\exists y(Ay\wedge y\leq x).$\hfill \emph{Atomicity}
\end{enumerate}
Atomicity can be consistently added to any standard mereological theory obtaining the corresponding atomistic version. In particular, we will call \emph{Atomic Core Mereology} to the theory consisting of Core Axioms + Atomicity. 

\subsection{Posets}
A \emph{partially ordered set}, or \emph{poset}, for short,  is a pair $(P, \leq)$ where $P$ is a set and $\leq$ is a reflexive, antisymmetric and transitive relation on $P$. 
 Posets are precisely the models of Core Mereology.

In set theory, a poset $(P, \leq)$ is said to be \emph{separative} if for all $x, y\in P$, whenever $x\not\leq y$, there exists $z\leq x$ such that $z$ and $y$ are disjoint. Obviously, this property is the algebraic counterpart of Strong Supplementation and clearly enough separative posets are the models of the theory consisting of Core Axioms +  Strong Supplementation. We will say that a poset is \emph{weakly supplemented} if for all $x,y\in P$, if $x<y$ then there exists $z<y$ such that $z\wr x$. Weakly supplemented posets are obviously the models of the theory consisting of Core Axioms + Weak Supplementation.

An \emph{upper bound} of a subset $A$ of a poset $P$ is an element $x\in P$ such that $y\leq x$ for all $y\in A$.  Dually a \emph{lower bound} is and element $x\in P$ such that $y\geq x$ for all $y\in A$. We will denote the fact that $x$ is an upper bound of $A$ by $x\geq A$ and the fact that it is a lower bound by $x\leq A$. The following notation will be useful:
\[
\uparrow^P\!x=\{y\in P\mid y\geq x\}\quad\text{and}\quad\uparrow^P\!A=\tbigcup\{\uparrow\!^P x\mid x\in A\}
\]
Dually, 
\[
\downarrow^P\!x=\{y\in P\mid y\leq x\}\quad\text{and}\quad\downarrow^P\!A=\tbigcup\{\downarrow\!^P x\mid x\in A\}
\]
We shall omit the superscript when the context is clear.

A map $f\colon P\to M$ is a called \emph{monotone} or \emph{order-preserving} if, for any $x, y\in P$,
\[
x\leq y\implies f(x)\leq f(y).
\]
Dually, $f$ is called \emph{order-reflecting} if, for any $x, y\in P$, 
\[
f(x)\leq f(y)\implies x\leq y.
\]
Admittedly, in these definitions we have committed a slight abuse of notation, as we have used $\leq$ for both the order relation on $P$ and the order relation on $M$. As in most of the cases there is no risk of confusion, we will keep this convention.

Note that an order-reflecting map is necessarily injective. 
We will say that $f$ is and \emph{order-embedding} if its both order-preserving and order-reflecting. In this case we will say that $M$ is an \emph{order-extension} of $P$. In general, we may talk about extensions of posets without explicitly specifying the order-embedding when there is an obvious canonical embedding as an inclusion map could be.

An element $x\in P$ is called the \emph{join} (or \emph{supremum}) of a subset $A\subseteq P$ if $x\geq A$ and, for any $y\in P$, $y\geq A$ implies that $y\geq x$. A poset $P$ is called a \emph{complete lattice} if every subset has a join in $P$. Given a map $f\colon P\to M$, in the case every element of $M$ is a join (in $M$) of
elements of the image of $P$, we will say that $M$ is a \emph{join-extension} of
$P$, that $f$ is \emph{join-dense}, and that $f(P)$ is \emph{join-dense} in $M$. We will use the term
\emph{join-completion} to refer to a join-extension $f\colon P\to M$ in which $M$ is a 
complete lattice.  A map $f\colon P\to M$ is called \emph{dense} if for each $x\in M$ there exists $y\in P$ such that $f(y)\leq x$. In such case we also say  that $f(P)$ is \emph{dense} in $M$.

\section{What is a mereological completion?}

The main aim of this section is to find reasonable definitions for fusion- and sum-completions. Accordingly, we present a discussion that leads us towards these definitions. In our discussion, we want to consider all possible requirements that we believe are reasonable in order to argue in their favor. Accordingly, some of the conditions that we present may be redundant. After these informal arguments, we will present formal definitions free of redundancies and prove that they satisfy all the requirements that we have defended.

By a mereological completion we mean an extension of  the domain of a model of a mereological theory that ensures that the composition of any nonempty collection of parts exists. Thus we would like to extend a poset $P$, as a model of Core Mereology, into another poset $M$ such that any nonempty subset of $M$ has a composition in $M$---either a fusion or a sum, depending on the case. Accordingly, in this section we discuss which conditions an extension like that should satisfy in order to be considered a mereological completion.

To begin with, let us model such an extension in the obvious way: as an order-embedding $f\colon P\to M$ where $M$ is thought of as a mereological completion of $P$. We want to add compositions to $P$ while modifying its mereological structure as little as possible. Firstly, we do not want to modify its order relation: the process of adding missing compositions should not alter the fact of whether $x$ is a part of $y$ or not. Keeping the overlap relation as it is may also be important. Firstly because we do not see a reason for adding compositions to make disjoint parts to overlap or overlapping parts to be disjoint\footnote{Clearly, this condition is redundant once we assume that $f$ is order-preserving. However, as stated at the beginning of this section, we want to put all possible requirements on the table and reserve for the end of this section the formal definitions that emerge from this discussion once freed from redundancies.}. But also, from a more practical point of view, because  the overlap relation plays an essential role in the definitions of fusion and sums. Thus, we may want to require to satisfy the following conditions:
\begin{Definitions}
Let $f\colon P\to M$ be a map between posets $P$ and $M$.
\begin{enumerate}
 \item We say that $f$ is \emph{overlap-preserving} or \emph{$\circ$-preserving} if $f(x)\circ f(y)$ whenever $x\circ y$.
\item We say that $f$ is \emph{overlap-reflecting} or \emph{$\circ$-reflecting} if $x\circ y$ whenever $f(x)\circ f(y)$.
\end{enumerate}
\end{Definitions}
Note that  monotone maps are always $\circ$-preserving.

With respect to composition, in order to call $f$ a mereological completion of $P$ we need that for any nonempty subset $A$ of $P$, $f(A)$ has a mereological composition in $M$. Although, this may not be enough: any nonempty subset of $M$ needs to have mereological composition for $M$ to satisfy universalism.
In addition, it would be interesting to keep those compositions that already exist in $P$: if $x\in P$ is the composition of a subset $A$ in $P$, how could the process of adding missing compositions break that relation? Accordingly we may require $f(x)$ to be the composition of $f(A)$ in $M$ whenever $x$ is the composition of $A$ in $P$. Dually, if $x\in P$ is not the composition in $P$ of a subset $A$, how could $f(x)$ be the composition of $f(A)$ in $M$? In other words, we do not want to claim that an already existing element is now the composition of a subset that did not have one. We want to actually add that missing composition. Consequently we may want not only to preserve but also to reflect compositions.

 Finally, it would be interesting to be as ``economical" as possible---in the sense of not enlarging $P$ more than necessary. Ideally, any addition to our mereological model, either elements or new relations, should be somehow justified. This is especially important if we take into account that one of the main criticisms of universalism arises from the fact that it may require a further ontological commitment. A minimal requirement in this direction is to ask for any new element to be the composition of a collection of elements of the original domain: for any $a\in M$ there exists a subset $A\subseteq P$ such that $a$ is the composition of $f(A)$ in $M$. In such case, we will say that $f$ is \emph{composition-dense}---either \emph{fusion-dense} or \emph{sum-dense} respectively.
 
 Unsurprisingly, all these conditions are not independent from each other. After studying some of these dependencies in order to avoid redundancies, we finish this section with the definitions of fusion- and sum-completions of a poset.
 
 \subsection{Fusions and fusion-completions}
 
\begin{Definition} Let $P$ be a poset, $x\in P$ and $\varnothing\neq A\subseteq P$.
We say that an element $x$ is a \emph{fusion} of $A$ if, for any $y\in P$, $x\circ y$ if and only if there exists $z\in A$ such that $z\circ y$.
\end{Definition}

\begin{Definitions}
Let $f$ be a map between posets $P$ and $M$. 
\begin{enumerate}
 \item We will say that $f$ is \emph{fusion-preserving} if $f(x)$ is a fusion of $f(A)$ whenever $x\in P$ is a fusion of $A\subseteq P$.
\item We will say that $f$ is \emph{fusion-reflecting} if $x\in P$ is a fusion of a subset $A\subseteq P$ whenever $f(x)$ is a fusion of $f(A)$.
\item We will say $f$ is \emph{fusion-dense} if for any $x\in M$ there exists $A\subseteq P$ such that $x$ is a fusion of $f(A)$. In such case we will say that $f(P)$ is \emph{fusion-dense} in $M$.
\end{enumerate}
\end{Definitions}

\begin{Lemma}
Let $f\colon P\to M$ be an order-embedding. If $f$ is $\circ$-reflecting then $f$ is fusion-reflecting.
\end{Lemma}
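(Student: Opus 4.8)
The plan is to simply unwind both definitions. Recall that "$x$ is a fusion of $A$" (in $P$) asserts the biconditional $x\circ y\leftrightarrow\exists z\in A\,(z\circ y)$ for all $y\in P$, while "$f(x)$ is a fusion of $f(A)$" (in $M$) asserts $f(x)\circ w\leftrightarrow\exists z\in A\,(f(z)\circ w)$ for all $w\in M$. The key observation is that instantiating the latter biconditional only at the elements $w=f(y)$ with $y\in P$, and then transporting overlap back and forth along $f$, already yields the former. The two transport steps are exactly $\circ$-preservation — which comes for free, since order-embeddings are monotone and monotone maps are $\circ$-preserving — and $\circ$-reflection, our standing hypothesis.

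Concretely, assume $f(x)$ is a fusion of $f(A)$ and fix $y\in P$. Suppose first that $x\circ y$; then $f(x)\circ f(y)$ by $\circ$-preservation, so the fusion condition for $f(x)$ applied at $w=f(y)$ produces some $z\in A$ with $f(z)\circ f(y)$, and $\circ$-reflection gives $z\circ y$. Conversely, suppose some $z\in A$ satisfies $z\circ y$; then $f(z)\circ f(y)$ by $\circ$-preservation, so the fusion condition (used in the other direction) gives $f(x)\circ f(y)$, and $\circ$-reflection gives $x\circ y$. Thus $x\circ y$ holds if and only if there is $z\in A$ with $z\circ y$, which is precisely the statement that $x$ is a fusion of $A$.

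There is essentially no obstacle here; the only point worth flagging is that the fusion condition in $M$ quantifies over all of $M$, whereas the argument only ever instantiates it at elements of the image $f(P)$. That is harmless, since the desired conclusion concerns overlaps within $P$ only. It is also worth noting that the full force of $f$ being an order-embedding is not needed: monotonicity of $f$ together with $\circ$-reflection already suffices for the argument above.
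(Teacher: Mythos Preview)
Your proof is correct and follows essentially the same approach as the paper's: both use that monotone maps are $\circ$-preserving, combine this with $\circ$-reflection to get $y\circ x\iff f(y)\circ f(x)$, and then instantiate the fusion condition for $f(x)$ in $M$ only at points of the image $f(P)$. The paper compresses the two directions into a single chain of biconditionals, whereas you write them out separately, but the content is identical; your closing remark that only monotonicity (not full order-reflection) is needed is also correct.
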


\begin{proof}
 Let $x\in P$ and $\varnothing\neq A\subseteq P$ be such that $f(x)$ is a fusion of $f(A)$. As any monotone map is $\circ$-preserving, if, in addition, $f$ is $\circ$-reflecting, we have that $y\circ x$ if and only if $f(y)\circ f(x)$. Then
\[
\begin{aligned}
z\circ x&\iff f(z)\circ f(x)\\
&\iff \exists y\in A \text{ such that } f(z)\circ f(y)\\
&\iff \exists y\in A \text{ such that } z\circ y.
\end{aligned}
\]
Thus $x$ is a fusion of $A$.
\end{proof}

\begin{Definitions} Let $P$ be a poset.
\begin{enumerate}
\item We will say that $P$ is \emph{fusion-complete} if there is a fusion for each nonempty subset $A$ of $P$.
\item We will say that a \emph{fusion-completion} of a poset $P$ is a pair $(M,e)$ where $M$ is a fusion-complete poset and $e\colon P\to M$ is an $\circ$-reflecting, fusion-preserving and fusion-dense order-embedding.
\end{enumerate}
\end{Definitions}

\subsection{Sums and sum-completions}

\begin{Definition}
Let $(P, \leq)$ be a poset, $x\in P$ and $A\subseteq P$. We will say thay $A$ \emph{encloses} $x$ and denote it by $x\prec A$ if for any $y\leq x$ there exists $z\in A$ such that $y\circ z$.
\end{Definition}

\begin{remark}There is a classical closely related notion: \emph{predensity}. A subset $A$ of a poset $P$ is said to be \emph{predense} if every element of $P$ overlaps some element of $A$. If $x\prec A$ we could  have equivalently said that $A$ is predense in $A\cup\downarrow\! x$ (or even that $A$ is predense in $x$, by abusing the tradition a bit), though we stick to our newly introduced term in order to underline the difference and avoid confusion.
\end{remark}

\begin{Definition}
Let $P$ be a poset, $x\in P$ and $\varnothing\neq A\subseteq P$.
We say that an element $x$ is a \emph{sum} of $A$ if $A\leq x$ and $x\prec A$.
\end{Definition}

In general, neither fusions nor sums have to be unique. However, in the case of sums, there is a distinguished one.

\begin{Definition}
 We will say that a poset $P$ is \emph{sum-complete} if there is a sum for each nonempty subset $A$ of $P$.
\end{Definition}

\begin{Lemma}\label{greatest}
Let $P$ be a sum-complete poset and $A$ be a nonempty subset of $P$. There exists a greatest sum of $A$.
\end{Lemma}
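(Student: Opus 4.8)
The plan is to produce the greatest sum of $A$ explicitly, as the sum of a suitable subset built from $A$. Set
\[
B=\{\,w\in P\mid w\prec A\,\},
\]
the collection of elements enclosed by $A$. First I would observe that $B\neq\varnothing$: for every $a\in A$ and every $v\leq a$, the element $v$ is a common part of $v$ and $a$, so $v\circ a$; hence $a\prec A$ and $A\subseteq B$. Since $P$ is sum-complete, $B$ has a sum; fix one and call it $y$. The claim is that $y$ is the greatest sum of $A$.

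The first main step is to check that $y$ is itself a sum of $A$, i.e.\ that $A\leq y$ and $y\prec A$. The former is immediate from $A\subseteq B\leq y$. For $y\prec A$, let $v\leq y$. Since $y$ is a sum of $B$ we have $y\prec B$, so there is $w\in B$ with $v\circ w$; choose a common part $u$ with $u\leq v$ and $u\leq w$. Now $w\in B$ means $w\prec A$, and $u\leq w$, so there is $z\in A$ with $u\circ z$; a common part of $u$ and $z$ is in particular a common part of $v$ and $z$, so $v\circ z$. This establishes $y\prec A$, and hence $y$ is a sum of $A$.

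The second main step is that $y$ dominates every sum: if $x$ is any sum of $A$, then by definition $x\prec A$, so $x\in B$; since $y$ is an upper bound of $B$, $x\leq y$. Together with the first step, this shows $y$ is the greatest sum of $A$. The only delicate point is the overlap-chaining in the verification of $y\prec A$ — passing from ``$v$ overlaps $w$'' and ``$w$ is enclosed by $A$'' to ``$v$ overlaps some element of $A$'' — which relies on overlap being inherited downward along $\leq$; everything else is bookkeeping. It is worth recording that the argument in fact identifies the greatest sum of $A$ with the sum of $\{\,w\in P\mid w\prec A\,\}$, a description that may prove useful later.
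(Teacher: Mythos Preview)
Your proof is correct and follows the same high-level strategy as the paper's: form an auxiliary set $B$, take a sum of $B$, and verify that this sum is a sum of $A$ lying above every other sum of $A$. The only difference is the choice of $B$. The paper takes $B$ to be the set of all \emph{sums} of $A$ (nonempty since $P$ is sum-complete), whereas you take $B=\{w\in P\mid w\prec A\}$, which in the paper's later notation is exactly $\overline{A}$. Your set is larger---it contains the paper's $B$ as well as $A$ itself---but the overlap-chaining step showing $y\prec A$ is the same in both cases. A small bonus of your route is that your argument in fact shows that \emph{any} sum of $\overline{A}$ is a sum of $A$, a fact the paper records separately as Lemma~\ref{closureprops}\,(6); so your proof anticipates that later lemma.
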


\begin{proof}
Let $B$ be the set of all sums of $A$ and $x$ be a sum of $B$. Let  $y\leq x$. Then there exists $z\in B$ such that $y\circ z$. As $z$ is a sum of $A$, there exists $t\in A$ such that $y\circ t$.
Therefore $x\prec A$. Clearly, $A\leq x$. We conclude that $x$ is a sum of $A$. Obviously, for any sum $y$ of $A$, $y\leq x$. By antisymmetry we conclude that $x$ is unique.
\end{proof}

\begin{Definitions}
Let $f$ be a map between posets $P$ and $M$. 
\begin{enumerate}
 \item We will say that $f$ is \emph{sum-preserving} if $f(x)$ is a sum of $f(A)$ whenever $x\in P$ is a sum of $A\subseteq P$.
\item We will say that $f$ is \emph{sum-reflecting} if $x\in P$ is a sum of a subset $A\subseteq P$ whenever $f(x)$ is a sum of $f(A)$.
\item We will say $f$ is \emph{sum-dense} if for any $x\in M$ there exists $A\subseteq P$ such that $x$ is a sum of $f(A)$. In such case we will say that $f(P)$ is \emph{sum-dense} in $M$.
\end{enumerate}
\end{Definitions}

\begin{Lemma}\label{denseemb}
Let $f\colon P\to M$ be a dense order-embedding. Then $f$ is:
\begin{enumerate}[\rm (1)]
\item $\circ$-reflecting,
\item sum-reflecting,
\item and sum-preserving.
\end{enumerate}
\end{Lemma}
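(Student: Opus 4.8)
The plan is to prove the three statements in the listed order, since (1) feeds into (2), and to invoke density exactly at the points where an arbitrary element of $M$ must be compared against data living in $P$. I would start by recording two standing facts: an order-embedding is monotone (hence $\circ$-preserving) and order-reflecting (hence injective), and density provides, for every $w\in M$, some $y\in P$ with $f(y)\le w$.

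For (1), given $f(x)\circ f(y)$ with $x,y\in P$, I would pick a common lower bound $w$ in $M$, use density to get $z\in P$ with $f(z)\le w$, and then apply order-reflection to $f(z)\le f(x)$ and $f(z)\le f(y)$ to obtain $z\le x$ and $z\le y$, i.e.\ $x\circ y$. For (2), assuming $f(x)$ is a sum of $f(A)$, order-reflection turns $f(A)\le f(x)$ into $A\le x$, while for any $y\le x$ monotonicity gives $f(y)\le f(x)$, so $f(y)$ overlaps some $f(a)$ with $a\in A$ and then, by (1), $y\circ a$; this yields $x\prec A$, and $A\neq\varnothing$ because $f(A)\neq\varnothing$, so $x$ is a sum of $A$.

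For (3), assuming $x$ is a sum of $A$, monotonicity immediately gives $f(A)\le f(x)$, so the work is to check $f(x)\prec f(A)$: given $w\le f(x)$ in $M$, use density to find $y\in P$ with $f(y)\le w\le f(x)$, so $y\le x$ by order-reflection, hence $y\circ a$ for some $a\in A$ since $x\prec A$, hence $f(y)\circ f(a)$ by $\circ$-preservation, and finally $w\circ f(a)$ because any common lower bound of $f(y)$ and $f(a)$ already lies below $w$. I expect this last transfer to be the only subtle point, and it is really the crux of the lemma: the enclosure relation $\prec$ is defined in terms of $P$, so verifying it in $M$ requires first pulling an arbitrary $w$ down into $f(P)$ (density) and then pushing the witnessing overlap back up to $w$ (downward closure of common lower bounds)---and exactly the same manoeuvre is what makes (1) work.
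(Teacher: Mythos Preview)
Your proof is correct and follows essentially the same approach as the paper: both argue (1) by pulling a common lower bound in $M$ down into $f(P)$ via density, (2) by combining order-reflection with (1), and (3) by density plus the downward closure of overlap. If anything, your treatment of (3) is slightly more explicit than the paper's in spelling out the final step that $f(y)\circ f(a)$ together with $f(y)\le w$ forces $w\circ f(a)$.
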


\begin{proof} (1) Let $x, y\in P$ be such that $f(x)\circ f(y)$. There exists $t\in M$ such that $t\leq f(x)$ and $t\leq f(y)$. Since $f$ is dense, there exists $z\in P$ such that $f(z)\leq t$. As $f$ is an order-embedding, we have that $z\leq x$ and $z\leq y$. Therefore $x\circ y$.

(2) Let $x\in P$ and $\varnothing\neq A\subseteq P$ be such  that $f(x)$ is a sum of $f(A)$ in $M$. Since $f$ is an order-embedding and $f(x)\geq f(A)$, we have that $x\geq A$. Let $y\in P$ be such that $y\leq x$. As $f(y)\leq f(x)$, there exists $z\in A$ such that $f(y)\circ f(z)$. By (1) we have that $y\circ z$. Therefore $x\prec A$ and $x$ is a sum of $A$.
%

(3) Let $x\in P$ and $A\subseteq P$ be such that $x$ is  a sum of $A$. Since $f$ is monotone, $f(x)\leq f(A)$. Let $y\in M$ be such that $y\leq f(x)$. As $f$ is dense, there exists $t\in P$ such that $f(t)\leq y$. Then, as $t\leq x$, there exists $u\in A$ such that $u\circ t$. Therefore $f(u)\circ f(t)$ with $f(u)\in f(A)$. In consequence $f(x)$ is a sum of $f(A)$ in $M$.
\end{proof}

\begin{Definition} 
A \emph{sum-completion} of a poset $P$ is a pair $(M,e)$ where $M$ is a sum-complete poset and $e\colon P\to M$ is an sum-dense  order-embedding.
\end{Definition}

Note that, by Lemma \ref{denseemb}, any sum-completion is dense, $\circ$-reflecting, sum-reflecting and sum-preserving.

\section{The mereological completion of separative posets}\label{completion1strongly}\label{sscomps}
In this section we focus on the models of Core Axioms + Strong Supplementation: separative posets. We will say that a mereological completion (either fusion- or sum-completion) $e\colon P\to M$ of a poset $P$ is \emph{separative} if $M$  is separative.

\begin{Theorem}
Let $P$ be a separative poset.  All separative fusion- and sum-completions of $P$ are isomorphic.
%
\end{Theorem}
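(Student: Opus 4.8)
The plan is to first reduce the statement to a classical fact about Boolean algebras, and then carry out the comparison by hand. For the reduction: since $P$ is separative and, by hypothesis, so is any separative completion $M$, (second-order) sums and fusions coincide in both, so the notions of \emph{separative fusion-completion} and \emph{separative sum-completion} collapse into one; moreover sums in a separative poset are unique (if $x,x'$ are both sums of $A$, then $x\prec A$ and $A\leq x'$ force $x\leq x'$ by (D.2$^\prime$), and symmetrically $x'\leq x$). A separative poset is fusion-complete exactly when it satisfies (C.1+), so by the Pontow--Schubert characterization \cite{Pontow2006} a separative fusion/sum-complete poset is precisely $B\setminus\{0\}$ for a complete Boolean algebra $B$, and in such a poset the fusion/sum of a nonempty subset is just its join (which is nonzero, hence lies in $B\setminus\{0\}$). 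Finally, every separative fusion- or sum-completion $e\colon P\to M$ is a \emph{dense} order-embedding: for sum-completions this is the remark after Lemma \ref{denseemb}, and for fusion-completions it follows from fusion-density together with sum $=$ fusion in $M$ (a witnessing nonempty $A$ for $x$ satisfies $e(A)\leq x$, so $e(a)\leq x$ for any $a\in A$). Thus the theorem reduces to the assertion that a separative poset admits, up to isomorphism, a unique dense order-embedding into a complete Boolean algebra with bottom removed --- a fact known from the forcing literature \cite{Jech2003}.

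To prove this uniqueness I would take two separative fusion/sum-completions $e_1\colon P\to M_1$ and $e_2\colon P\to M_2$, write $M_i=B_i\setminus\{0\}$, and build an isomorphism explicitly. For $x\in M_1$ set $A_x=\{p\in P\mid e_1(p)\leq x\}$; density of $e_1$ makes $A_x$ nonempty, so one can define $\phi(x)=\tbigvee e_2(A_x)\in M_2$, the sum/fusion of $e_2(A_x)$ in $M_2$. Define $\psi\colon M_2\to M_1$ symmetrically. The goal is to show $\phi$ and $\psi$ are monotone, mutually inverse, and satisfy $\phi\circ e_1=e_2$; then $\phi$ is an order-isomorphism, and since order-isomorphisms preserve and reflect overlap, fusions and sums, $(M_1,e_1)\cong(M_2,e_2)$ as completions. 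Monotonicity is immediate ($x\leq x'$ gives $A_x\subseteq A_{x'}$, hence $\phi(x)\leq\phi(x')$); the identity $\phi\circ e_1=e_2$ follows from $A_{e_1(p)}=\downarrow^P p$ and $e_2(p)=\tbigvee e_2(\downarrow^P p)$, the latter being a short application of (D.2$^\prime$) and density of $e_2$. The substantive point is that $\phi$ is order-\emph{reflecting} and $\psi\circ\phi=\mathrm{id}$ (and symmetrically); both come down to the equality $A'_{\phi(x)}=A_x$, where $A'_y=\{q\in P\mid e_2(q)\leq y\}$, and both inclusions use the same three ingredients: density (every nonzero element of $B_i$ dominates a point of $e_i(P)$), $\circ$-reflection of $e_i$ (Lemma \ref{denseemb}(1)), and the distributive law of a complete Boolean algebra, namely that $0\neq e_2(p)\leq\tbigvee e_2(A)$ forces $e_2(p)\circ e_2(q)$ for some $q\in A$; (D.2$^\prime$) is then used to convert the resulting overlap information back into the order relation.

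I expect the distributivity step to be the only real obstacle; the remainder is routine bookkeeping with order-embeddings and Strong Supplementation. It is precisely here that one uses that the ambient fusion-complete separative poset is not merely a complete lattice but a \emph{Boolean} one, which is why I would invoke \cite{Pontow2006} up front rather than trying to re-derive this compatibility between arbitrary joins and overlap directly from separativity. An alternative, more self-contained route would be to first construct one canonical separative completion of $P$ --- the complete Boolean algebra of regular cuts of $P$ --- verify it is a separative sum-completion, and then run the same comparison argument between an arbitrary separative completion and this fixed model; the comparison step is unchanged.
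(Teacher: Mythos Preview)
Your reduction is essentially the paper's proof: identify a separative fusion/sum-complete poset with a complete Boolean algebra minus its bottom (via \cite{Pontow2006}), check that the completion map is dense, and invoke the uniqueness of the dense Boolean completion of a separative poset \cite[Theorem~14.10]{Jech2003}. The paper stops at the citation, whereas you go on to sketch the proof of Jech's theorem by building $\phi$ and $\psi$ explicitly; that extra material is correct but not needed for the paper's argument. One minor remark: the step you flag as the ``real obstacle'' (that $0\neq e_2(p)\leq\tbigvee e_2(A)$ forces $e_2(p)\circ e_2(q)$ for some $q\in A$) does not actually require Boolean distributivity --- since in $M_2$ the join of $e_2(A)$ is its \emph{sum}, the condition $e_2(p)\leq\tbigvee e_2(A)$ already gives $e_2(p)\prec e_2(A)$, and in particular $e_2(p)$ overlaps some $e_2(q)$.
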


\begin{proof}

Let $e\colon P\to M$ be a fusion-completion of a poset $P$ in which $M$ is separative. Then $M$ is a model of Closed Classical Mereology, that is, a complete Boolean algebra $M^\prime$ with the bottom removed. Therefore we can embed $P$ into the complete Boolean algebra $M^\prime$:
\[
P\hookrightarrow M\hookrightarrow M^\prime.
\]
 If $e$ is a sum-completion instead, then $M$ is a separative sum-complete poset. As all sums are also fusions, $M$ is also fusion-complete and therefore also a model of Closed Classical Mereology. Thus $M$ is a complete Boolean algebra $M^\prime$ with the bottom removed and $P$ embeds into it. In the case of the sum completion, $e$ is obviously dense. In the case of the fusion completion, the fact that $e$ is dense follows from the fact that $P$ is separative. 

Now note that any separative poset $P$ can be uniquely embedded, up to isomorphism, into a complete Boolean algebra $B^\prime$ in such a way that the image of $P$ is a \emph{dense} subset of $B$ where $B$ is $B^\prime$ with the bottom removed \cite[Theorem 14.10]{Jech2003}. Therefore, if we restrict ourselves to separative posets, up to isomorphism, there is at most one fusion-completion and one unique sum-completion of $P$ and that they both coincide if they exists. 
\end{proof}


Admittedly, in the setting of separative posets and separative completions, there is no need of discriminating between fusion- and sum-completions, as fusions and sums coincide. Further,
it is a well-known result that in any model $M$ of Classical Mereology with unrestricted sums, that is, every  nonempty subset of $M$ has a mereological sum, $x$ is a sum of $A$ if and only if $x$ is the supremum of $A$ and $A\neq\varnothing$ (see \cite{PIETRUSZCZAK2018a, PIETRUSZCZAK2020, GruszczynskiPietruszczak2014} for details about the relation between sums and suprema). Thus a sum- or fusion-completion of a separative poset $P$  is a model of Classical Mereology with unrestricted composition and, accordingly, it is also the standard Boolean completion of $P$. Vice versa, if we take the Boolean completion of $P$, it turns out to be a model of Classical Mereology (due to Tarksi \cite{Tarski1935}). Therefore the study of sum- and fusion-completions of separative posets reduces to the study of their Boolean completions, a well-studied problem.

The Boolean completion of separative posets presented in \cite{Jech2003} is described in terms of regular ideals of $P$. We will focus on the less explored world of arbitrary mereological completions in the following sections, but to finish this section we provide below an alternative description of the Boolean completion which we find specially natural when thought of as a fusion-completion: 

For each $x\in P$ let $\mathsf{O}^P(x)$ denote the collection of all elements that overlap with $x$, that is, 
\[
\mathsf{O}^P(x)=\{y\in P\mid x\circ y\}.
\]
For each subset $A\subset P$, let
\[
\mathsf{O}^P(A)=\tbigcup\{\mathsf{O}(x)\mid x\in A\}.
\]
When the context is clear, we will drop the superscript and write $\mathsf{O}(x)$ and $\mathsf{O}(A)$ for $\mathsf{O}^P(x)$ and $\mathsf{O}^P(A)$ instead.
Obviously, we have that $\mathsf{O}(x)=\mathsf{O}(\{x\})$. Note also that
\[
\mathsf{O}(x)=\uparrow\downarrow\!x\quad\text{and}\quad\mathsf{O}(A)=\uparrow\downarrow\!A.
\]
Now let 
\[
\mathsf{O}P=\{\mathsf{O}(A)\mid \varnothing\neq A\subseteq P\}.
\]
We will always consider $\mathsf{O}P$ ordered by inclusion, so we shall write $\mathsf{O}P$ to denote the poset $(\mathsf{O}P,\subseteq)$.

\begin{Proposition}\label{completion1}
$\mathsf{O}P$ is fusion-complete. In particular,
\[
\mathsf{O}\left(\tbigcup_{i\in I}A_i\right)
\]
 is the fusion of $\{\mathsf{O}(A_i)\}_{i\in I}$ where $I\neq\varnothing$ and $\varnothing\neq A_i\subseteq P$ for each $i\in I$.
\end{Proposition}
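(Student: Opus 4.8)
The plan is to verify directly from the definition of fusion (Definition in \S3.1) that $\mathsf{O}\bigl(\bigcup_{i\in I}A_i\bigr)$ is the fusion of the family $\{\mathsf{O}(A_i)\}_{i\in I}$ inside the poset $\mathsf{O}P$, and that this already covers every nonempty subset of $\mathsf{O}P$. First I would record the basic behaviour of $\mathsf{O}$ with respect to unions and of overlap in $\mathsf{O}P$. The key computational fact is that for nonempty $A,B\subseteq P$ one has $\mathsf{O}(A)\circ\mathsf{O}(B)$ in $\mathsf{O}P$ if and only if $\mathsf{O}(A)\cap\mathsf{O}(B)\neq\varnothing$, and, crucially, this happens if and only if $a\circ b$ for some $a\in A$, $b\in B$. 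The forward direction of this last equivalence is the only slightly delicate point: if $y\in\mathsf{O}(A)\cap\mathsf{O}(B)$ then $y\circ a$ for some $a\in A$ and $y\circ b$ for some $b\in B$, but overlap is not transitive, so one cannot immediately conclude $a\circ b$. Here is where I would produce the intermediate element: since $\mathsf{O}(a)\cap\mathsf{O}(b)$ need not be witnessed by $y$ itself, I instead argue that if no element of $A$ overlaps any element of $B$ then $\mathsf{O}(A)\cap\mathsf{O}(B)=\varnothing$ directly — because any common element $z$ of some $\downarrow a$ and $\downarrow b$ would give $a\circ b$ — wait, that is exactly the point: take $t\le \mathsf{O}(A)$-witness; more carefully, note $\mathsf{O}(A)=\uparrow\downarrow A$, so if $\mathsf{O}(A)$ and $\mathsf{O}(B)$ meet at $y$, then $y\ge u$ for some $u\le a\in A$ and $y\ge v$ for some $v\le b\in B$; this still does not force $a\circ b$. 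So the honest route is: observe that $\mathsf{O}(A)\cap\mathsf{O}(B)\neq\varnothing$ iff some $c\in P$ overlaps both some $a\in A$ and some $b\in B$, which is weaker than $a\circ b$. Consequently the right statement to prove is that overlap in $\mathsf{O}P$ is detected by nonempty intersection, and that $\mathsf{O}(A)\circ_{\mathsf{O}P}\mathsf{O}(C)\iff \mathsf{O}(A)\cap\mathsf{O}(C)\neq\varnothing\iff \exists a\in A\,(\,a\in \mathsf{O}(C)\,)$, i.e. iff some element of $A$ overlaps some element of $C$ in $P$.

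With that lemma in hand the main argument is short. Write $U=\bigcup_{i\in I}A_i$ and $X=\mathsf{O}(U)$. First, $X\in\mathsf{O}P$ since $U$ is a nonempty subset of $P$. Now take an arbitrary $Y=\mathsf{O}(C)\in\mathsf{O}P$ with $\varnothing\neq C\subseteq P$. By the lemma, $Y\circ X$ in $\mathsf{O}P$ iff some $c\in C$ lies in $\mathsf{O}(U)$, i.e. iff there exist $c\in C$ and $u\in U$ with $c\circ u$ in $P$; and since $U=\bigcup_i A_i$, this is equivalent to: there exists $i\in I$ and $a\in A_i$ with $c\circ a$, which by the lemma again says precisely that $Y\circ\mathsf{O}(A_i)$ in $\mathsf{O}P$ for some $i$. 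Thus $Y\circ X\iff \exists i\in I\ Y\circ\mathsf{O}(A_i)$, which is exactly the condition that $X$ is a fusion of $\{\mathsf{O}(A_i)\}_{i\in I}$ in $\mathsf{O}P$.

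Finally, to get that $\mathsf{O}P$ is fusion-complete, note that any nonempty subset $\mathcal{A}\subseteq\mathsf{O}P$ is of the form $\{\mathsf{O}(A_i)\}_{i\in I}$ for a nonempty index set $I$ and nonempty $A_i\subseteq P$ (by definition of $\mathsf{O}P$), and the computation just performed exhibits $\mathsf{O}(\bigcup_i A_i)\in\mathsf{O}P$ as a fusion of $\mathcal{A}$; hence every nonempty subset of $\mathsf{O}P$ has a fusion. The only real obstacle is the overlap lemma, specifically getting the characterisation of $\circ$ in $\mathsf{O}P$ right despite the non-transitivity of overlap in $P$; everything after that is bookkeeping with unions. (One should also keep in mind that membership of an element $c\in P$ in $\mathsf{O}(A)$ is literally ``$c$ overlaps some element of $A$'', so ``$c\in\mathsf{O}(A)$'' and ``$c$ overlaps some $a\in A$'' are interchangeable by fiat, which is what makes the chain of equivalences close up cleanly.)
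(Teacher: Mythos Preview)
Your overall plan---reduce everything to a characterisation of overlap in $\mathsf{O}P$ in terms of overlap in $P$, then apply it twice---is exactly right, and once that characterisation is in hand the main argument is just as short as you say. The problem is that the lemma you settle on is misstated and unproved. The chain
\[
\mathsf{O}(A)\circ_{\mathsf{O}P}\mathsf{O}(C)\iff \mathsf{O}(A)\cap\mathsf{O}(C)\neq\varnothing\iff \exists a\in A\ (a\in\mathsf{O}(C))
\]
is false at the middle term. Take $P=\{a,c,u,v,y\}$ with $u\le a$, $u\le y$, $v\le c$, $v\le y$ and no other nontrivial relations; then $y\in\mathsf{O}(a)\cap\mathsf{O}(c)$, yet $a\wr c$ and there is no nonempty $D$ with $\mathsf{O}(D)\subseteq\mathsf{O}(a)\cap\mathsf{O}(c)=\{y\}$, so $\mathsf{O}(a)$ and $\mathsf{O}(c)$ do \emph{not} overlap in $\mathsf{O}P$. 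Thus nonempty intersection neither implies overlap in $\mathsf{O}P$ nor the existence of $a\in A$ with $a\in\mathsf{O}(C)$. The \emph{outer} equivalence you actually use, $\mathsf{O}(A)\circ_{\mathsf{O}P}\mathsf{O}(C)\iff\exists a\in A\,\exists c\in C\ (a\circ_P c)$, is correct, but it does not factor through the middle term and you have not supplied its proof.

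Your first paragraph shows you saw the difficulty (non-transitivity of $\circ$) and reached for ``the intermediate element'', but then backed off without producing it. That intermediate element is precisely the content of the paper's proof: from a witness $\mathsf{O}(C)\subseteq\mathsf{O}(B)\cap\mathsf{O}(\bigcup_i A_i)$ one picks any $x\in C$, uses $x\in\mathsf{O}(\bigcup_i A_i)$ to obtain $y\le x$ with $y\le z$ for some $z\in A_j$, and then $\mathsf{O}(y)\subseteq\mathsf{O}(x)\subseteq\mathsf{O}(B)$ together with $\mathsf{O}(y)\subseteq\mathsf{O}(z)\subseteq\mathsf{O}(A_j)$ exhibits the overlap of $\mathsf{O}(B)$ with $\mathsf{O}(A_j)$. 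That two-step descent (from $x$ to $y$) is the entire substance of the proposition, and it is what your sketch is missing.
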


\begin{proof}
Firstly note that as 
 \[
 \mathsf{O}(A_j)\leq \mathsf{O}\left(\tbigcup_{i\in I}A_i\right),
 \]
  anything that overlaps one of the $\mathsf{O}(A_j)$ overlaps $\mathsf{O}\left(\tbigcup_{i\in I}A_i\right)$. If $\mathsf{O}(B)$ overlaps $\mathsf{O}\left(\tbigcup_{i\in I}A_i\right)$ by definition there exists $\mathsf{O}(C)$ such that 
 \[
 \mathsf{O}(C)\subseteq\mathsf{O}(B) \quad\text{and}\quad  \mathsf{O}(C)\subseteq\mathsf{O}\left(\tbigcup_{i\in I}A_i\right).
 \]
 Then for any $x\in C$ we have 
 \[
 \mathsf{O}(x)\subseteq\mathsf{O}(B)\quad\text{and}\quad\mathsf{O}(x)\subseteq\mathsf{O}\left(\tbigcup_{i\in I}A_i\right).
 \]
  Consequently, there exists $j\in I$ such that $x\in\mathsf{O}(A_j)$, that is, there exists $y\in P$ such that $y\leq x$ and $y\leq z$ for some $z\in A_j$. Therefore, we have
 \[
 \mathsf{O}(y)\subseteq\mathsf{O}(z)\subseteq\mathsf{O}(A_j)\quad\text{and}\quad  \mathsf{O}(y)\subseteq\mathsf{O}(x)\subseteq\mathsf{O}(B).
\]
In consequence, $\mathsf{O}(B)$ overlaps $\mathsf{O}(A_j)$.
\end{proof}

\begin{Proposition}\label{completion2}
If $P$ is separative, the map $\omega\colon P\to OP$ given by $x\mapsto \mathsf{O}(x)$ is a fusion-completion of $P$. 
\end{Proposition}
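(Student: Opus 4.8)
The plan is to verify directly that the pair $(\mathsf{O}P,\omega)$ satisfies the definition of a fusion-completion: that $\mathsf{O}P$ is fusion-complete and that $\omega$ is an $\circ$-reflecting, fusion-preserving and fusion-dense order-embedding. Fusion-completeness of $\mathsf{O}P$ is exactly Proposition~\ref{completion1}, so everything reduces to properties of $\omega$. First I would check that $\omega$ is an order-embedding. Monotonicity needs no hypothesis: if $x\le y$, then any $t$ with $t\circ x$ also satisfies $t\circ y$, so $\mathsf{O}(x)\subseteq\mathsf{O}(y)$. Order-reflection is the one place where separativity is genuinely used, and I would argue it by contraposition: if $x\not\le y$, then separativity gives $z\le x$ with $z\wr y$; since $z\circ x$ we get $z\in\mathsf{O}(x)$, while $z\notin\mathsf{O}(y)$, so $\mathsf{O}(x)\not\subseteq\mathsf{O}(y)$. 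Hence $\mathsf{O}(x)\subseteq\mathsf{O}(y)$ forces $x\le y$, and $\omega$ is an order-embedding (in particular injective).

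Next I would note that $\omega$ is dense: any $\mathsf{O}(A)\in\mathsf{O}P$ has $A\neq\varnothing$, and for any $a\in A$ we have $\omega(a)=\mathsf{O}(a)\subseteq\mathsf{O}(A)$. Being a dense order-embedding, $\omega$ is therefore $\circ$-reflecting by Lemma~\ref{denseemb}(1). Fusion-density is also immediate from Proposition~\ref{completion1}, applied to the family of singletons $\{\{a\}\}_{a\in A}$: since $\bigcup_{a\in A}\{a\}=A$, that proposition says $\mathsf{O}(A)$ is the fusion in $\mathsf{O}P$ of $\{\mathsf{O}(a)\mid a\in A\}=\omega(A)$, which is precisely the assertion that $\omega$ is fusion-dense.

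It remains to establish fusion-preservation, and here the key observation is a short computation: if $x\in P$ is a fusion of $\varnothing\neq A\subseteq P$, then for every $t\in P$ one has $t\in\mathsf{O}(x)\iff x\circ t\iff\exists z\in A\,(z\circ t)\iff t\in\mathsf{O}(A)$, so $\omega(x)=\mathsf{O}(x)=\mathsf{O}(A)$. By the previous paragraph $\mathsf{O}(A)$ is a fusion of $\omega(A)$ in $\mathsf{O}P$, hence so is $\omega(x)$, which is exactly fusion-preservation. (Note that this last argument does not use separativity at all; separativity is needed only, but is exactly what is needed, to turn the monotone map $\omega$ into an order-embedding.) I do not expect a real obstacle here; the only points requiring care are keeping every index set nonempty so that the various $\mathsf{O}(-)$ actually live in $\mathsf{O}P$, and correctly locating where separativity enters.
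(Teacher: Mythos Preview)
Your proof is correct and follows essentially the same structure as the paper's: verify that $\omega$ is an order-embedding via separativity, invoke Proposition~\ref{completion1} for fusion-completeness and fusion-density, and deduce fusion-preservation from the identity $\mathsf{O}(x)=\mathsf{O}(A)$ whenever $x$ is a fusion of $A$. The one genuine difference is your treatment of $\circ$-reflection: the paper checks it by a direct unwinding of the definition of overlap in $\mathsf{O}P$, whereas you observe that $\omega$ is dense and then appeal to Lemma~\ref{denseemb}(1). Your route is shorter and reuses machinery already in the paper; the paper's direct argument has the minor advantage of not needing to establish density as an intermediate step, but both are straightforward.
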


\begin{proof}
Since strong supplementation is equivalent to (D.2$^\prime$), $\omega\colon P\to \mathsf{O}P$ is an order-embedding. Indeed, for any $x,y\in P$ such that $\mathsf{O}(x)\subseteq\mathsf{O}(y)$,  we have that, in particular, $z\circ y$ for all  $z\leq x$. As $P$  is separative, by (D.2$^\prime$) we conclude that $x\leq y$. Thus $\omega$ is order-reflecting. As it is obviously also order-preserving, $\omega$ is an order-embedding.

In order to show that $\omega$ is $\circ$-reflecting, let $x, y\in P$ be such that $\mathsf{O}(x)\circ\mathsf{O}(y)$. Then there exists $\varnothing\neq A\subseteq P$ such that $\mathsf{O}(A)\subseteq\mathsf{O}(x)$ and $\mathsf{O}(A)\subseteq\mathsf{O}(y)$. Let $z\in A$. We have $\mathsf{O}(z)\subseteq\mathsf{O}(x)$ and $\mathsf{O}(z)\subseteq\mathsf{O}(y)$. Thus there exists $t$ in $P$ such that $t\leq z$ and $t\leq x$ since $z\circ x$. Then $t\circ y$, as $t \circ z$. Since $y\circ t$ and $t\leq x$, we conclude that $x\circ y$.

By Proposition \ref{completion1} $\mathsf{O}P$ is fusion-complete and, for any $\varnothing\neq A\subseteq P$, $\mathsf{O}(A)$ is the  fusion of $\{\mathsf{O}(x)\mid x\in A\}$. Consequently $\omega$ is fusion-dense.

Finally, in order to check that $\omega$ is fusion-preserving, let $\varnothing\neq A\subseteq P$. By Proposition \ref{completion1}, $\mathsf{O}(A)$ is the fusion of $\omega (A)=\{\mathsf{O}(y)\mid y\in A\}$. If $x$ is a fusion of $A$, then $\mathsf{O}(x)=\mathsf{O}(A)$. Therefore $\mathsf{O}(x)$ is a fusion of $\omega(A)$.
\end{proof}

\begin{Proposition}\label{completion1conserv}
If $P$ is separative so is $\mathsf{O}P$.
\end{Proposition}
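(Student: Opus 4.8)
The plan is to verify the definition of separativity for $\mathsf{O}P$ head‑on, producing in each case a witness of the special shape $\mathsf{O}(y)$ for a single $y\in P$. So let $U,V\in\mathsf{O}P$ with $U\not\subseteq V$; the task is to exhibit a nonempty $W\in\mathsf{O}P$ with $W\subseteq U$ such that $W$ and $V$ are disjoint in $\mathsf{O}P$, i.e.\ such that no nonempty member of $\mathsf{O}P$ is contained in $W\cap V$.

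First I would reduce to a principal witness. Choose $x\in U\setminus V$ and write $U=\mathsf{O}(A)$ and $V=\mathsf{O}(B)$ for suitable nonempty $A,B\subseteq P$. Since $x\in\mathsf{O}(A)$ there are $a\in A$ and $y\in P$ with $y\leq x$ and $y\leq a$, so that $\mathsf{O}(y)=\uparrow\downarrow y\subseteq\uparrow\downarrow a\subseteq\uparrow\downarrow A=\mathsf{O}(A)=U$. As $V=\uparrow\downarrow B$ is an up-set and $x\notin V$ with $y\leq x$, we also get $y\notin V$. I then claim $W:=\mathsf{O}(y)$ is the required witness: it lies in $\mathsf{O}P$, it is nonempty because $y\in\mathsf{O}(y)$, and it is contained in $U$.

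It remains to check that $\mathsf{O}(y)$ and $V$ are disjoint in $\mathsf{O}P$. Assume not: then there is a nonempty $W'\in\mathsf{O}P$ with $W'\subseteq\mathsf{O}(y)\cap V$; writing $W'=\mathsf{O}(C)$ with $\varnothing\neq C\subseteq P$ and picking $c\in C$ gives $\mathsf{O}(c)\subseteq\mathsf{O}(C)=W'\subseteq\mathsf{O}(y)\cap V$. From $\mathsf{O}(c)\subseteq\mathsf{O}(y)$, together with the fact that every $z\leq c$ satisfies $z\circ c$ and hence lies in $\mathsf{O}(c)$, we obtain $z\circ y$ for all $z\leq c$; since $P$ is separative, condition (D.2$^\prime$) applied to $c$ and $y$ yields $c\leq y$. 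On the other hand $c\in\mathsf{O}(c)\subseteq V=\mathsf{O}(B)$ provides $b\in B$ with $c\circ b$, and combining this with $c\leq y$ gives $y\circ b$; but $y\notin\mathsf{O}(B)$ means precisely that $y\wr b$ for every $b\in B$, a contradiction. Hence no such $W'$ exists, so $\mathsf{O}(y)$ is disjoint from $V$, and $\mathsf{O}P$ is separative.

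I do not expect a real obstacle here; the only point worth flagging is the reduction to the principal witness $\mathsf{O}(y)$ with $y\notin V$, which is exactly what allows the separativity of $P$, in the guise of (D.2$^\prime$), to close the argument. (One could alternatively package the middle step as the equivalence ``$\mathsf{O}(c)\subseteq\mathsf{O}(y)$ iff $c\leq y$'' in a separative poset, but the inline appeal to (D.2$^\prime$) is all that is needed.)
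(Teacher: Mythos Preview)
Your proof is correct and follows essentially the same route as the paper's: pick an element in $U\setminus V$, pass to a common lower bound $y$ with an element of $A$ to get the principal witness $\mathsf{O}(y)\subseteq U$, and then use separativity of $P$ via (D.2$^\prime$) to derive a contradiction from any alleged overlap with $V$. The only cosmetic difference is that the paper names the sets with $\mathsf{O}(B)\not\subseteq\mathsf{O}(A)$ (so your $A,B$ are swapped) and calls the witness element $t$ rather than $y$.
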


\begin{proof}
First let $P$ be a separative poset and $\mathsf{O}(A), \mathsf{O}(B)\in\mathsf{O}P$ such that $\mathsf{O}(B)\not\subseteq\mathsf{O}(A)$. Then there exists $x\in P$ and $y\in B$ such that $x\circ y$ and $x\wr z$ for all $z\in A$. By definition, there exists $t\in P$ such that $t\leq x$ and $t\leq y$. Thus
\[
\mathsf{O}(t)\subseteq\mathsf{O}(y)\subseteq\mathsf{O}(B).
\]

Assume that $\mathsf{O}(t)\circ\mathsf{O}(A)$. Then there exists $u\in P$ such that $\mathsf{O}(u)\subseteq \mathsf{O}(t)$ and $\mathsf{O}(u)\subseteq\mathsf{O}(A)$. By strong supplementation, we have that $u\leq t\leq x$. Thus $u\circ x$. Then $x\in\mathsf{O}(A)$, a contradiction. We conclude that  $\mathsf{O}(t)\wr\mathsf{O}(A)$.
\end{proof}


\begin{Corollary}
If $P$ is separative, then adding a bottom element to $\mathsf{O}P$ yields a complete Boolean algebra.
\end{Corollary}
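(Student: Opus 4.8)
The plan is to identify $\mathsf{O}P$ as a model of Closed Classical Mereology and then to invoke the Pontow--Schubert characterization recalled in \S2: the models of Closed Classical Mereology are precisely the complete Boolean algebras with the bottom element removed \cite{Pontow2006}. Granting this, adjoining a bottom to $\mathsf{O}P$ produces a complete Boolean algebra, which is exactly the assertion of the corollary.

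So everything reduces to verifying that $\mathsf{O}P$ satisfies (P.2), (P.3), (D.2) and (C.1+). Antisymmetry and transitivity are immediate since $(\mathsf{O}P,\subseteq)$ is a poset. Strong Supplementation (D.2) is Proposition \ref{completion1conserv}. For (C.1+) one needs that every nonempty subfamily of $\mathsf{O}P$ admits a fusion in $\mathsf{O}P$; this is precisely the content of Proposition \ref{completion1}, since every element of $\mathsf{O}P$ has the form $\mathsf{O}(A)$ with $\varnothing\neq A\subseteq P$, so an arbitrary nonempty subfamily is of the shape $\{\mathsf{O}(A_i)\}_{i\in I}$ with $I\neq\varnothing$ and has fusion $\mathsf{O}\big(\tbigcup_{i\in I}A_i\big)$. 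Here one should observe that the poset-theoretic notion of fusion used in Proposition \ref{completion1} coincides with the relation $F_{\newA}x$ appearing in the schema (C.1+) once the domain is taken to be $\mathsf{O}P$, so there is no mismatch between the two formulations.

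With $\mathsf{O}P$ recognized as a model of Closed Classical Mereology, \cite{Pontow2006} completes the proof. Alternatively, the conclusion follows from the proof of the first theorem of \S\ref{sscomps}: by Propositions \ref{completion2} and \ref{completion1conserv}, $\omega\colon P\to\mathsf{O}P$ is a separative fusion-completion of $P$, and that proof already exhibits every such completion as a complete Boolean algebra with the bottom removed. I do not anticipate a genuine obstacle; the only point meriting a moment's care is the one flagged above, namely that fusion-completeness of $\mathsf{O}P$ really does deliver (C.1+) for every nonempty subfamily and not merely for those explicitly named in Proposition \ref{completion1} --- which is immediate from the surjectivity of $A\mapsto\mathsf{O}(A)$ onto $\mathsf{O}P$.
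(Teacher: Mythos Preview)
Your proposal is correct and follows essentially the same route as the paper: verify that $\mathsf{O}P$ is a model of Closed Classical Mereology by checking the poset axioms, (D.2) via Proposition~\ref{completion1conserv}, and (C.1+) via fusion-completeness, then invoke the Pontow--Schubert characterization. The only cosmetic difference is that the paper cites Proposition~\ref{completion2} rather than Proposition~\ref{completion1} for the existence of fusions, but since the former relies on the latter this is not a substantive distinction.
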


\begin{proof}
 $\mathcal{O}P$ obviously satisfies (P.1)--(P.3). By Propositions \ref{completion2} and \ref{completion1conserv} it also satisfies (C.1) and (D.2), respectively. Therefore it is a model of Closed Classical Mereology. 
\end{proof}

\begin{Theorem}Let $P$ be a separative poset.  Up to isomorphism, $(\omega,\mathsf{O}P)$ is the unique separative  fusion-  and sum-completion of $P$.
\end{Theorem}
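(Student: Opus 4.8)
The plan is to assemble the statement from results already proved in this section, the only real work being to check that $(\omega,\mathsf{O}P)$ lies in \emph{both} families of completions. First, Propositions~\ref{completion2} and~\ref{completion1conserv} together say that $\omega\colon P\to\mathsf{O}P$ is a fusion-completion of $P$ and that $\mathsf{O}P$ is separative; hence $(\omega,\mathsf{O}P)$ is a separative fusion-completion of $P$.

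Next I would verify that $(\omega,\mathsf{O}P)$ is also a separative sum-completion. For sum-completeness: by Proposition~\ref{completion1} every nonempty subset of $\mathsf{O}P$ has a fusion, and since $\mathsf{O}P$ is separative (Proposition~\ref{completion1conserv}) fusions and sums coincide there, so every nonempty subset of $\mathsf{O}P$ has a sum. For the embedding: $\omega$ is an order-embedding by Proposition~\ref{completion2}, and it is dense, since for any $\mathsf{O}(A)\in\mathsf{O}P$ one may pick $z\in A$ and obtain $\omega(z)=\mathsf{O}(z)\subseteq\mathsf{O}(A)$; Lemma~\ref{denseemb} then yields that $\omega$ is $\circ$-reflecting, sum-reflecting and sum-preserving. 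Finally $\omega$ is sum-dense: by Proposition~\ref{completion1} each $\mathsf{O}(A)$ is the fusion of $\omega(A)=\{\mathsf{O}(x)\mid x\in A\}$, hence, again by separativity of $\mathsf{O}P$, its sum. Thus $(\omega,\mathsf{O}P)$ is a separative sum-completion of $P$.

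It then remains only to appeal to the first Theorem of this section, which asserts that all separative fusion- and sum-completions of $P$ are mutually isomorphic. Since $(\omega,\mathsf{O}P)$ has just been shown to be both a separative fusion-completion and a separative sum-completion, it is, up to isomorphism, the unique object in either class, and the two notions agree for separative $P$; this is precisely the claim.

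I do not anticipate any genuine obstacle here: the mathematical content is carried by Propositions~\ref{completion1}--\ref{completion1conserv} and by the earlier uniqueness Theorem, and the present argument is essentially bookkeeping. The single point requiring a moment's care is the interchange of ``fusion'' and ``sum'' in the separative setting, which is exactly the observation recalled in \S2 that, under Strong Supplementation, second-order fusions and second-order sums coincide.
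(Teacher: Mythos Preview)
Your proposal is correct and follows essentially the same route as the paper: establish that $(\omega,\mathsf{O}P)$ is a separative completion using Propositions~\ref{completion2} and~\ref{completion1conserv}, and then invoke the uniqueness result (the paper cites \cite[Theorem~14.10]{Jech2003} directly, you cite the first Theorem of the section, which itself rests on Jech). The one place you go further than the paper is in explicitly verifying that $(\omega,\mathsf{O}P)$ is also a \emph{sum}-completion---showing $\omega$ is dense and then using Lemma~\ref{denseemb} and separativity to pass from fusions to sums---whereas the paper's one-line proof leaves this implicit; your extra care here is warranted and costs nothing.
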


\begin{proof}
This follows directly from \cite[Theorem 14.10]{Jech2003}, as any separative fusion- or sum-completion is dense.
\end{proof}

\section{Sum-completions of arbitrary posets} In this section we show that any poset has a sum-completion. In the case of weakly supplemented posets, the completion that we present here is the smallest one in a particular sense: it is a quotient of any other sum-completion.

Before describing this particular sum-completion, we show that we cannot expect it to be weakly supplemented in general. The following example illustrates that, in contrast to what happens with separative fusion-completions, there are weakly supplemented posets with no weakly supplemented sum-completions.
\begin{Example}
 Consider a poset $P$ with four elements $\{a,b,c,d\}$ ordered as in  Figure \ref{counterwscomp}. Let $f\colon P\to M$ be an order-embedding where $M$ is weakly supplemented and such that there is an element $x\in M$ which is a sum of $\{f(a), f(b)\}$. Accordingly $f(a)<x$ and $f(b)< x$. As $M$ is weakly supplemented, there exists an element $y\in M$ such that $y\leq x$  and $y\wr f(a)$. As $f(a)$ overlaps every element of $f(P)$, no element of $f(P)$ can be below $y$ and consequently $y$  cannot be a sum of any subset of $f(P)$.  Therefore $f$ is not sum-dense and consequently it cannot be a sum-completion of $P$.

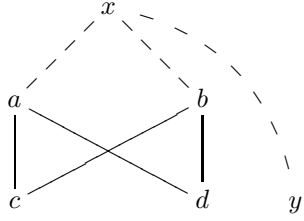
\begin{figure}[tb]\[
\xymatrix{&x\ar@{--}[dl]\ar@{--}[dr]\ar@{--}@/^1.5pc/[ddrr]&&\\
a\ar@{-}[d]\ar@{-}[drr]&&b\ar@{-}[d]\ar@{-}[dll]&\\
c&&d&y
}
\]
\caption{Here and in the rest of the diagrams depicting posets, we use dashed  lines to represent relations that were not present in the original poset but rather introduced as an extension of it.}
\label{counterwscomp}
\end{figure}
\end{Example}
We will denote by $\overline{A}$ the collection of all elements of $P$ that $A$ encloses:
\[
\overline{A}=\{x\in P\mid x\prec A\}.
\]
The following lemma will be useful.
\begin{Lemma}\label{closureprops}
Let $P$ be a poset, $x\in P$ and $A\subseteq P$. The following hold:
\begin{enumerate}[\rm (1)]
\item $A\subseteq \overline{A}$.
\item $\overline{A}$ is closed downwards, that is, $\overline{A}=\downarrow\!\overline{A}$.
\item $\overline{\overline{A}}=\overline{A}$.
\item $x\prec\overline{A}$ if and only if $x\in \overline{A}$.
\item $\overline{A}\subseteq\overline{B}$ if and only if $x\prec B$ for any $x\in A$.
\item Any sum of $\overline{A}$ is a sum of $A$.
\end{enumerate}
\end{Lemma}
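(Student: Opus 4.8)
The plan is to verify each of the six claims about the closure operator $A\mapsto\overline{A}$ directly from the definition of enclosure, $x\prec A$ iff every $y\leq x$ overlaps some element of $A$. For \textbf{(1)}: if $x\in A$ and $y\leq x$, then $y\circ x$ by reflexivity, and $x\in A$, so $x\prec A$. For \textbf{(2)}: if $x\prec A$ and $x'\leq x$, then any $y\leq x'$ satisfies $y\leq x$ by transitivity, hence overlaps some element of $A$; so $x'\prec A$, giving $\downarrow\overline{A}\subseteq\overline{A}$, and the reverse inclusion is immediate since each $x\in\overline{A}$ has $x\leq x$. Claims (1) and (2) are the easy warm-ups.

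For \textbf{(3)} and \textbf{(4)}, the natural order is to prove (4) first and then deduce (3). The inclusion $\overline{A}\subseteq\overline{\overline{A}}$ is an instance of (1) applied to the set $\overline{A}$. For the reverse direction (which is the substance of (4)), suppose $x\prec\overline{A}$ and take any $y\leq x$; then there is $z\in\overline{A}$ with $y\circ z$, say $t\leq y$ and $t\leq z$. Since $z\prec A$ and $t\leq z$, $t$ overlaps some $w\in A$; but $t\leq y$, so the common part of $t$ and $w$ witnesses $y\circ w$ with $w\in A$. Hence $x\prec A$, i.e. $x\in\overline{A}$. This gives (4), and (3) is the special case $x$ ranging over $\overline{\overline{A}}$, combined with (1) for the other inclusion. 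The only mild subtlety here is chasing the common-part witnesses through two layers of overlap, using transitivity of $\leq$ to push a lower bound of $t$ and $z$ down below $y$; I expect this to be the most delicate bookkeeping in the lemma, though still routine.

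For \textbf{(5)}: if $\overline{A}\subseteq\overline{B}$ then by (1) each $x\in A$ lies in $\overline{A}\subseteq\overline{B}$, so $x\prec B$; conversely, if every $x\in A$ satisfies $x\prec B$, then $A\subseteq\overline{B}$, and applying the monotonicity of the closure (which follows from (1)–(4): $A\subseteq\overline{B}$ implies $\overline{A}\subseteq\overline{\overline{B}}=\overline{B}$, using (3)) gives $\overline{A}\subseteq\overline{B}$. So I would first record monotonicity of $\overline{(-)}$ as an immediate consequence of (1) and (3), then (5) is a one-line application. Finally, for \textbf{(6)}: let $s$ be a sum of $\overline{A}$, so $s\geq\overline{A}$ and $s\prec\overline{A}$. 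Since $A\subseteq\overline{A}$ by (1), we get $s\geq A$. And $s\prec\overline{A}$ together with (4) gives $s\in\overline{A}$, i.e. $s\prec A$. Hence $s$ is a sum of $A$. The one point to confirm is that $A\neq\varnothing$ forces $\overline{A}\neq\varnothing$ (true by (1)), so the notion ``sum of $\overline{A}$'' is legitimate; with that, (6) is immediate from (1) and (4).

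No single step is a real obstacle; if anything, the ``hard'' part is simply making sure the overlap-witness chase in (4) is written cleanly, since that argument is reused implicitly in (5) and (6). I would present (1), (2), (4), (3), then monotonicity, then (5), then (6), which minimizes duplicated work.
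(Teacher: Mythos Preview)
Your proposal is correct and essentially matches the paper's proof: the paper also calls (1) and (2) obvious, proves the overlap-chase once (it does it under the heading of (3) and then says (4) follows, whereas you do (4) first and deduce (3)---same argument), and derives (5) from (1) and (3) exactly as you do. The only cosmetic difference is in (6), where the paper repeats the overlap-chase directly rather than invoking (4) as you do; your route is slightly cleaner but not substantively different.
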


\begin{proof}
(1) and (2) are obvious.

(3) Let $x\in\overline{\overline{A}}$ and $y\leq x$. By definition there exists $z\in \overline{A}$ such that $y\circ z$. Thus there exists $t\in P$ such that $t\leq y$ and $t\leq z$. Since $z\in\overline{A}$, there exists $u\in A$ such that $t\circ u$. As $t\leq y$, we have that $u\circ y$. We conclude that $x\in \overline{A}$. Thus $\overline{\overline{A}}\subseteq\overline{A}$.  By (1) we have that $\overline{\overline{A}}\supseteq\overline{A}$. 

(4) follows directly from (3).

(5) If $\overline{A}\subseteq\overline{B}$, let $x\in A$. Then, by (1), $x\in\overline{A}$. Thus $x\in\overline{B}$, so $x\prec B$. The reverse implication follows from (1) and (3).
%

(6) Let $x$ be a sum of $\overline{A}$. By (1), $A\leq x$. In order to check thet $x\prec A$, let $y\leq x$. There exists $z\in \overline{A}$ such that $y\circ z$. Accordingly, there exists $t\in P$ such that $t\leq y$ and $t\leq z$. Since $z\prec A$, there exists $u\in A$ such that $u\circ t$. Consequently $u\circ y$. We conclude that $x\prec A$.
\end{proof}

We proceed now to describe a sum-completion of an arbitrary poset $(P,\leq)$. Let
\[
S(P)=P\cup\left\{s_{\overline{A}}\mid \varnothing\neq A\text{ has no sum in }P\right\}.
\]
As the  reader may have guessed, here $s$ in $s_{\overline{A}}$ makes reference to the missing sum of $A$. For the sake of simplicity we implicitly assume that the set
\[
\left\{s_{\overline{A}}\mid \varnothing\neq A\text{ has no sum in }P\right\}
\]
 has empty intersection with $P$. Of course this could easily be put in a more precise and rigorous form. Now let us define a relation $\sqsubseteq$ on $S(P)$ as follows: for each $x, y\in P$ and $A,B\subseteq P$ without a sum in $P$,  
\begin{enumerate}[(s1)]
\item $x\sqsubseteq y$ if and only if $x\leq y$,
\item $x\sqsubseteq s_{\overline{A}}$ if and only if $x\in\overline{A}$,
\item $s_{\overline{A}}\sqsubseteq s_{\overline{B}}$ if and only if $\overline{A}\subseteq\overline{B}$,
\item $s_{\overline{A}}\sqsubseteq x$ if and only if $\overline{A}\leq x$.
\end{enumerate}

\begin{Lemma}
$\sqsubseteq$ is a partial order on $S(P)$.
\end{Lemma}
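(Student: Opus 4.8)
The plan is to verify reflexivity, antisymmetry and transitivity of $\sqsubseteq$ by splitting into cases according to which of the two arguments lie in the old part $P$ and which are new symbols $s_{\overline{A}}$, using throughout the properties of the operator $A\mapsto\overline{A}$ collected in Lemma \ref{closureprops} --- chiefly that $A\subseteq\overline{A}$ and that $\overline{A}$ is downward closed.

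Reflexivity is immediate: clause (s1) turns $x\sqsubseteq x$ into $x\leq x$, and clause (s3) turns $s_{\overline{A}}\sqsubseteq s_{\overline{A}}$ into $\overline{A}\subseteq\overline{A}$. For antisymmetry, suppose $u\sqsubseteq v$ and $v\sqsubseteq u$. If $u,v\in P$ this is antisymmetry of $\leq$; if both are new symbols it is antisymmetry of $\subseteq$, together with the fact that $s_{\overline{A}}=s_{\overline{B}}$ exactly when $\overline{A}=\overline{B}$. The mixed case is the one with real content, and I expect it to be the main obstacle: if $x\in P$, $s_{\overline{A}}$ is a new symbol, and $x\sqsubseteq s_{\overline{A}}\sqsubseteq x$, then (s2) and (s4) give $x\in\overline{A}$ and $\overline{A}\leq x$; hence $A\subseteq\overline{A}\leq x$, so $A\leq x$, while $x\in\overline{A}$ says precisely $x\prec A$, so $x$ would be a sum of $A$ in $P$ --- contradicting the hypothesis under which the symbol $s_{\overline{A}}$ was introduced. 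Thus the mixed case never occurs and antisymmetry holds.

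For transitivity, given $u\sqsubseteq v\sqsubseteq w$ I would run through the eight cases determined by the types of $u,v,w$. The all-old case is transitivity of $\leq$ and the all-new case is transitivity of $\subseteq$; each of the remaining six reduces to one of a handful of elementary observations, namely: $\overline{C}$ is downward closed, so $x\leq y$ with $y\in\overline{C}$ yields $x\in\overline{C}$; an upper bound of $\overline{A}$ lying above $y$ lies above every element $\leq y$; if $\overline{A}\leq y$ and $y\in\overline{C}$ then $\overline{A}\subseteq\downarrow\!y\subseteq\overline{C}$; and $\overline{A}\subseteq\overline{B}$ together with $\overline{B}\leq z$ gives $\overline{A}\leq z$. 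Each of these is immediate from Lemma \ref{closureprops}, so no difficulty is expected beyond the antisymmetry argument above, which is the only step that is not pure bookkeeping.
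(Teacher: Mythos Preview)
Your proposal is correct and follows essentially the same case-by-case approach as the paper. The one small variation is in the mixed antisymmetry case: you argue directly that $x\in\overline{A}$ means $x\prec A$ and $A\subseteq\overline{A}\leq x$ gives $A\leq x$, so $x$ is a sum of $A$; the paper instead observes that $x$ is a sum of $\overline{A}$ and then invokes Lemma~\ref{closureprops}(6) to conclude $x$ is a sum of $A$ --- both routes are equally valid and yield the same contradiction.
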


\begin{proof} In order to check that $\sqsubseteq$ is antisymmetric, let $x,y\in P$ and $A$ and $B$ be subsets of $P$ without sums in $P$. If
\[
x\sqsubseteq s_{\overline{A}}\sqsubseteq x,
\]
then $x\prec\overline{A}$ and $\overline{A}\leq x$. Thus $x$ is a sum of $\overline{A}$ in $P$, a contradiction by Lemma \ref{closureprops} (6). Obviously if 
\[
s_{\overline{A}}\sqsubseteq s_{\overline{B}}\sqsubseteq s_{\overline{A}}
\]
then $\overline{A}\subseteq \overline{B}$ and vice versa, so $s_{\overline{A}}=s_{\overline{B}}$. Finally, if
\[
x\sqsubseteq y\sqsubseteq x
\]
then $x=y$, since $\leq$ is antisymmetric.

Checking that $\sqsubseteq$ is transitive is tedious but straightforward.  Let $x,y, z\in P$ and $A$, $B$ and $C$ be subsets of $P$ without sums in $P$:
\begin{itemize}
\item If $x\sqsubseteq y\sqsubseteq z$ then $x\sqsubseteq z$ follows from the transitivity of $\leq$. 
\item If $x\sqsubseteq y\sqsubseteq s_{\overline{A}}$ then $x\sqsubseteq s_{\overline{A}}$ since $x\leq y \in \overline{A}$ and $\overline{A}$ is closed downwards. 
\item If $x\sqsubseteq  s_{\overline{A}}\sqsubseteq  y$ then $x\in\overline{A}$ and $\overline{A}\leq y$ and consequently $x\sqsubseteq  y$.
\item  If $x\sqsubseteq  s_{\overline{A}}\sqsubseteq s_{\overline{B}}$ then $x\in \overline{A}\subseteq\overline{B}$, therefore $x\sqsubseteq s_{\overline{B}}$.
\item If $s_{\overline{A}}\sqsubseteq x\sqsubseteq y$ then $s_{\overline{A}}\sqsubseteq y$ as $\overline{A}\leq x\leq y$. 
\item If $s_{\overline{A}}\sqsubseteq x\sqsubseteq s_{\overline{B}}$ then  $\overline{A}\leq x$ and $\downarrow\! x\subseteq \overline{B}$ by Lemma \ref{closureprops} (2). Thus $s_{\overline{A}}\sqsubseteq s_{\overline{B}}$ as $\overline{A}\subseteq \overline{B}$.
\item If $s_{\overline{A}}\sqsubseteq s_{\overline{B}}\sqsubseteq x$ then $s_{\overline{A}}\sqsubseteq x$ as $\overline{A}\subseteq \overline{B}\leq x$.
\item  If $s_{\overline{A}}\sqsubseteq s_{\overline{B}}\sqsubseteq s_{\overline{C}}$ then $s_{\overline{A}}\sqsubseteq s_{\overline{C}}$ since set inclusion is transitive.
\end{itemize}
 Finally, $\sqsubseteq$ is obviously reflexive.
\end{proof}

\begin{Theorem} The inclusion map $\sigma\colon P\hookrightarrow S(P)$, given by $x\mapsto x$ for each $x\in P$, is a sum-completion of $P$.
\end{Theorem}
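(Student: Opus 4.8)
The plan is to check the three conditions in the definition of sum-completion: that $\sigma$ is an order-embedding, that $\sigma$ is sum-dense, and that $S(P)$ is sum-complete. The first is immediate from clause (s1). Before the other two I would record one preliminary fact, used repeatedly: the overlap relation of $S(P)$ restricts on $P$ to the overlap relation of $P$. The forward direction is clear (a $P$-witness of $x\circ z$ is an $S(P)$-witness); for the converse, if $w\sqsubseteq x$ and $w\sqsubseteq z$ with $x,z\in P$, then either $w\in P$ and we are done, or $w=s_{\overline C}$, in which case any $c\in C$ (nonempty) satisfies $c\in\overline C$ by Lemma~\ref{closureprops}(1), hence $c\sqsubseteq s_{\overline C}$, and then (s2)/(s4) give $c\leq x$ and $c\leq z$. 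From here on I would only invoke transitivity of $\sqsubseteq$ (already proved), downward-closure of each $\overline C$ (Lemma~\ref{closureprops}(2)), and this fact, so that any $P$-level overlap involving a point $\sqsubseteq s_{\overline C}$ can be pulled back below $s_{\overline C}$.

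For sum-density: each $x\in P$ is a sum of $\sigma(\{x\})$ in $S(P)$, the only nontrivial point being $x\prec\{x\}$, i.e.\ that every $y\sqsubseteq x$ overlaps $x$ in $S(P)$, which for $y=s_{\overline C}$ is again handled by passing to some $c\in C$. And each new element $s_{\overline A}$ is a sum of $\sigma(A)$: $\sigma(A)\sqsubseteq s_{\overline A}$ is clause (s2) together with $A\subseteq\overline A$, and for $s_{\overline A}\prec A$ one takes $y\sqsubseteq s_{\overline A}$ and splits into $y\in\overline A$ (so $y\prec A$ directly yields $z\in A$ with $y\circ z$) and $y=s_{\overline C}$ with $\overline C\subseteq\overline A$ (so any $c\in C$ has $c\prec A$, and the resulting $P$-witness is pulled back under $s_{\overline C}$). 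In particular $\sigma$ is sum-dense and hence dense, so Lemma~\ref{denseemb} then gives for free that $\sigma$ is also $\circ$-reflecting, sum-reflecting and sum-preserving.

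For sum-completeness, given a nonempty $\mathcal A\subseteq S(P)$ I would put
\[
A=\{x\in P\mid x\sqsubseteq a\text{ for some }a\in\mathcal A\},
\]
which is nonempty: any $a\in\mathcal A\cap P$ lies in $A$, and any $a=s_{\overline B}\in\mathcal A$ contributes $\overline B\supseteq B\neq\varnothing$. I claim that the sum of $\mathcal A$ in $S(P)$ is a sum of $A$ in $P$ when such a sum exists, and $s_{\overline A}$ otherwise — in the second case $A$ has no sum in $P$, so $s_{\overline A}\in S(P)$; call the candidate $s$. That $\mathcal A\sqsubseteq s$ is routine from clauses (s1)--(s4), the definition of sum, and $A\subseteq\overline A$ in the second case. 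The substantive part is $s\prec\mathcal A$: given $y\sqsubseteq s$, in each of the four branches ($s\in P$ or $s=s_{\overline A}$, crossed with $y\in P$ or $y=s_{\overline C}$) one extracts an element of $P$ that is $\sqsubseteq y$ and $\prec A$ — in the $s_{\overline C}$ branches by passing to some $c\in C$ and using downward-closure of $\overline C$. That element overlaps some $z\in A$ in $P$, and since $z\sqsubseteq a$ for some $a\in\mathcal A$, transitivity of $\sqsubseteq$ together with downward-closure upgrades this to $y\circ a$ in $S(P)$, as needed.

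The bulk of the work, and the only place where genuine care is required, is this last four-branch case analysis: the four clauses (s1)--(s4) each interact with overlap differently, and the new elements $s_{\overline B}$ occur both inside $\mathcal A$ and below the candidate sum. The uniform device that keeps it routine is the one isolated at the outset — whenever an element $s_{\overline C}$ intervenes, replace it downward by an ordinary element $c\in C$, carry out the overlap argument in $P$, and lift the resulting witness back using that $\overline C$ is a downset.
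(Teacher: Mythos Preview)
Your proof is correct and follows essentially the same strategy as the paper's: verify the order-embedding, establish sum-density by showing each $s_{\overline A}$ is a sum of (a set with closure) $\overline A$, then prove sum-completeness by associating to $\mathcal A$ a subset of $P$ and splitting on whether it has a $P$-sum. The differences are organizational rather than substantive. You isolate the fact that $S(P)$-overlap restricts to $P$-overlap as a standalone lemma and reuse it systematically; the paper instead leans on Lemma~\ref{denseemb}(3) (sum-preservation under dense embeddings) to handle the case where the auxiliary set has a sum in $P$, whereas you run the four-branch case analysis directly. Your definition $A=\{x\in P\mid x\sqsubseteq a\text{ for some }a\in\mathcal A\}$ is a cleaner packaging of the paper's explicit decomposition $C=\bigcup_i\overline{A}_i\cup B$ (the two sets have the same sums and the same closure), and showing $s_{\overline A}$ is a sum of $A$ rather than of $\overline A$ is an immaterial variant. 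Nothing is missing.
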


\begin{proof}
From the definition of $\sqsubseteq$, it is immediate to see that $\sigma$ is an order-embedding. In order to check that $\sigma$ is sum-dense, let $A$ be a nonempty subset of $P$ without a sum in $P$. Clearly $\overline{A}\sqsubseteq s_{\overline{A}}$.  Let $x\in S(P)$ such that $x\sqsubseteq s_{\overline{A}}$. If $x\in P$, then, by definition, $x\in\overline{A}$ and obviously $x$ overlaps an element of $\overline{A}$ as it overlaps at least itself. If $x=s_{\overline{B}}$ for some nonempty $B\subseteq P$, then $\overline{B}\subseteq \overline{A}$. Take any $y\in B$. One has $y\sqsubseteq s_{\overline{B}}$, thus $y\circ s_{\overline{B}}$, and $y\in \overline{A}$. Therefore $x$ overlaps an element of $\overline{A}$. Consequently $s_{\overline{A}}\prec \overline{A}$. Thus $s_{\overline{A}}$ is a sum of $\overline{A}$ in $S(P)$. Clearly, for any $x\in P$, we have that $x$ is a sum in $S(P)$ of $\downarrow^{S(P)}\!x$. We conclude that $\sigma$ is sum-dense.

In order to check that $S(P)$ is sum-complete, let $\varnothing\neq A\subseteq S(P)$. Without loss of generality, we can assume that $A$ is of the form
\[
A=\{s_{\overline{A}_i}\}_{i\in I}\cup B
\]
with $\varnothing \neq A_i\subseteq P$ for each $i\in I$ and $B\subseteq P$, where either $I$ or $B$ might be empty, but not both. Let
\[
C=\tbigcup_{i\in I}\overline{A}_i\cup B.
\]
 If $C$ has a sum $x$ in $P$, then $A\sqsubseteq x$ as $B\leq x$ and $\overline{A}_i\leq x$ for each $i\in I$. For any $y\sqsubseteq x$, by Lemma \ref{denseemb}(3), there exists $z\in C$ such that $z\circ y$. If $z\in\overline{A_i}$ for some $i\in I$, then $z\sqsubseteq s_{\overline{A}_i}$ and in consequence $s_{\overline{A}_i}\circ y$. If $z\in B$, then $z\in A$. Therefore, $x$ is a sum of $A$. 

In the other case, if 
\[
C=\tbigcup_{i\in I}\overline{A}_i\cup B
\]
 has no sum in $P$, we know that $s_{\overline{C}}$ is a sum of $\overline{C}$ in $S(P)$. We only have to check that $s_{\overline{C}}$ is also a sum of $A$.  First note that, for each $j\in I$,  one has that $s_{\overline{A}_j}\leq s_{\overline{C}}$ since
\[
\overline{A}_j\subseteq\tbigcup_{i\in J}\overline{A}_i\cup B=C\subseteq\overline{C}.
\]
 By Lemma \ref{closureprops} (1) we have $x\leq s_{\overline{C}}$ for any $x\in B$, we conclude that $A\sqsubseteq s_{\overline{C}}$. In order to check that $s_{\overline{C}}\prec A$, let $x\in S(P)$ such that $x\sqsubseteq s_{\overline{C}}$. As $s_{\overline{C}}$ is a sum of $\overline{C}$, there exists $y\in\overline{C}$ such that $y\circ x$. Thus, there exists $z\in P$ such that $z\leq y$ and $z\leq x$. Since $y\prec C$, there exists $t\in C$ such that $z\circ t$. Then either $t\in\overline{A}_i$ for some $i\in I$ or $t\in B$. In the former case, $x\circ s_{\overline{A}_i}$ as $t\leq s_{\overline{A}_i}$. In the latter case, $t\in A$.
\end{proof}

The theorem below shows that $S(P)$ is a distinguished completion among all completions of weakly separative posets, in the sense that it is an image of any completion under a monotone mapping with the composition of corresponding mappings agreeing with $\sigma$. To prove this theorem, we will need a technical lemma.

\begin{Lemma}\label{qwelldefined}
Let $P$ and $M$ be posets, $g\colon P\to M$ be an $\circ$-reflecting order-embedding, $x\in M$ and $A,B\subseteq P$. If $x$ is a fusion of both $e(A)$ and $e(B)$, then $\overline{A}=\overline{B}$.
\end{Lemma}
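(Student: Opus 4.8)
The plan is to show that the relation "$x$ is a fusion of $e(A)$'' pins down $\overline{A}$ completely, by characterizing $\overline{A}$ purely in terms of the element $x\in M$ and the embedding $g$. Concretely, I claim that under the hypotheses, for any $p\in P$ one has $p\in\overline{A}$ if and only if $g(p)$ overlaps $x$ in $M$. Since the right-hand condition makes no reference to $A$, applying this with $B$ in place of $A$ gives $p\in\overline{A}\iff g(p)\circ x\iff p\in\overline{B}$, hence $\overline{A}=\overline{B}$, which is exactly the conclusion.

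\emph{Proof sketch of the claim.} Fix $p\in P$. First suppose $p\in\overline{A}$, i.e.\ $p\prec A$; I want $g(p)\circ x$. Since $g$ is $\circ$-reflecting it is in particular order-reflecting, hence injective, and since it is an order-embedding it is monotone and thus $\circ$-preserving; combining these, $g(q)\circ g(r)\iff q\circ r$ for $q,r\in P$. Now $g(p)\in P$... more precisely we work in $M$: we must show $g(p)$ and $x$ share a common lower bound in $M$. Because $x$ is a fusion of $g(A)$, by definition $g(p)\circ x$ holds if and only if there is $a\in A$ with $g(p)\circ g(a)$, i.e.\ (by $\circ$-reflection) some $a\in A$ with $p\circ a$. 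But $p\prec A$ means precisely that every $y\leq p$ overlaps some element of $A$, and taking $y=p$ yields such an $a$. Hence $g(p)\circ x$. Conversely, suppose $g(p)\circ x$; I want $p\prec A$, i.e.\ every $q\le p$ overlaps some element of $A$. Fix $q\le p$. Then $g(q)\le g(p)$, and since $g(p)\circ x$ we have... here I need that $g(q)\circ x$ as well — this requires a little more: from $g(p)\circ x$ and the fusion property, $g(p)\circ g(a)$ for some $a\in A$; but this alone does not give $g(q)\circ x$. Instead use the fusion property directly on $g(q)$: I do not yet know $g(q)\circ x$. So the right move is: since $q\le p$ and $g(p)\circ x$, pick a common lower bound $t\le g(p)$, $t\le x$ in $M$; then $t\le x$ and $x$ a fusion of $g(A)$ forces $t\circ x$ trivially — still not enough. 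The cleaner route: show first that $g(p)\circ x$ implies $g(q)\circ x$ for all $q\le p$ is false in general, so instead I characterize $\overline A$ via the \emph{downset}: $p\in\overline A\iff \downarrow p\subseteq \mathsf O^P(A)$-style condition, and translate each $q\le p$ separately using $g(q)\circ x\iff \exists a\in A,\ q\circ a$. Thus I should prove: $g(q)\circ x\iff q\in\mathsf O^P(A)$ for \emph{every} $q\in P$ (the fusion property gives exactly this), and then $p\prec A\iff \forall q\le p,\ q\in\mathsf O^P(A)\iff \forall q\le p,\ g(q)\circ x$, which is again an $A$-free condition, giving $\overline A=\overline B$.

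\textbf{The main obstacle} is precisely the bookkeeping just exposed: the naive ``$p\in\overline A\iff g(p)\circ x$'' is too crude, because being a fusion controls the overlap relation pointwise but not the enclosure relation directly. The fix is to phrase the invariant at the level of the whole principal downset $\downarrow p$: the fusion property yields, for every single $q\in P$, the equivalence $g(q)\circ x \iff \exists a\in A\ (q\circ a)$, i.e.\ $q\in\mathsf O^P(A)$; then $p\prec A$ unfolds to ``$\forall q\in P\,(q\le p\Rightarrow q\in\mathsf O^P(A))$'', which via the equivalence becomes ``$\forall q\le p,\ g(q)\circ x$'', manifestly independent of $A$. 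Running the same computation for $B$ and comparing gives $p\in\overline A\iff p\in\overline B$ for all $p\in P$, hence $\overline A=\overline B$. No completeness or supplementation hypothesis on $M$ is needed; only that $g$ is an $\circ$-reflecting order-embedding, which is exactly what is assumed. (One should also double-check the degenerate case $A=\varnothing$ or $B=\varnothing$: if $A=\varnothing$ then no element of $M$ can be a fusion of $g(A)=\varnothing$ unless one adopts a convention, so either the hypotheses preclude it or $\overline{\varnothing}=\varnothing$ and the statement holds vacuously; I would add a remark handling this.)
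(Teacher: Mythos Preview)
Your final argument is correct: the key observation that for every $q\in P$ the fusion property plus $\circ$-preservation/reflection give
\[
g(q)\circ x \iff \exists a\in A\ (q\circ a) \iff q\in\mathsf{O}^P(A),
\]
and hence $p\in\overline{A}\iff(\forall q\le p)\,g(q)\circ x$, immediately yields an $A$-free description of $\overline{A}$ and therefore $\overline{A}=\overline{B}$. (Your exploratory first attempt, trying to characterize $\overline{A}$ via the single condition $g(p)\circ x$, was indeed too coarse, and you correctly diagnosed and fixed it.)

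The paper proceeds by the same underlying mechanism but organizes it as a direct element chase: from $y\prec A$ and $t\le y$ it extracts a witness $u\le t$ with $u\le z\in A$, pushes $e(u)$ into $M$, and then transfers to $B$. As written, however, the paper's proof invokes $e(A)\le x$ and $x\prec e(B)$, which are \emph{sum} properties rather than consequences of the fusion hypothesis; this is harmless in the application (where $x$ is in fact a sum) but is a slight mismatch with the lemma as stated. Your formulation stays strictly within the fusion hypothesis and is in that sense cleaner; what the paper's version buys is a more hands-on trace of the witnesses, which some readers may find easier to follow.
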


\begin{proof}
Let $y\prec A$ and $t\leq y$. Then there exists $z\in A$ such that $t\circ z$, that is, there exists $u\in P$ such that $u\leq t$ and $u\leq z$. Accordingly, we have that $e(u)\leq e(t)$ and $e(u)\leq e(z)$. As $e(z)\in e(A)\leq x$, we have that $e(z)\leq x$. 
Thus $e(u)\leq x$. Then there exists $e(v)\in e(B)$ such that $e(u)\circ e(v)$, since $x\prec e(B)$. As $e$ reflects $\circ$, then $u\circ v$ and, since $u\leq t$, we have that $v\circ t$. In consequence $y\prec B$. We conclude that $\overline{A}\subseteq \overline{B}$. By a dual argument we can check the reverse inclusion. 
\end{proof}

\begin{Theorem}
Let $P$ be a weakly supplemented poset and $e\colon P\to M$ be a sum-completion of $P$. Then there exists a monotone onto map $q \colon M\to  S(P)$ such that
\[
\xymatrix{
P\ar[r]^e\ar[dr]_\sigma&M\ar@{.>>}[d]^q\\
&S(P)
}
\]
commutes.
\end{Theorem}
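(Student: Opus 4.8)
The plan is to define $q$ on $M$ by sending each $x \in M$ to the sum in $S(P)$ of the collection $e^{-1}(\downarrow\!x)$, or more precisely by reading off the ``enclosure data'' that $x$ carries over $P$. Concretely, since $e$ is a sum-completion, by Lemma \ref{denseemb} it is dense, $\circ$-reflecting and sum-reflecting, and in particular (combined with fusion-density of a sum-completion — every $x\in M$ is a sum, hence a fusion, of some $e(A)$) every $x\in M$ is a fusion of $e(A_x)$ for some $\varnothing\neq A_x\subseteq P$. By Lemma \ref{qwelldefined} the set $\overline{A_x}$ does not depend on the choice of $A_x$, so $x\mapsto\overline{A_x}$ is a well-defined assignment. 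I would then set $q(x)=x$ when $\overline{A_x}=\downarrow^P\!y$ for some $y\in P$ that is a sum of $A_x$ (this happens exactly when $A_x$ has a sum in $P$, and then one checks $x$ must equal $e(y)$ using density and that sums are suprema here), and $q(x)=s_{\overline{A_x}}$ otherwise. The commutativity $q\circ e=\sigma$ is then immediate: for $y\in P$, $e(y)$ is a fusion of $e(\{y\})$, $\overline{\{y\}}=\downarrow^P\!y$, and $y$ is a sum of $\{y\}$, so $q(e(y))=y=\sigma(y)$.

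Next I would verify that $q$ is monotone. Suppose $x\sqsubseteq x'$ in $M$, i.e. $x\leq x'$, with $x$ a fusion of $e(A)$ and $x'$ a fusion of $e(B)$. I claim $\overline{A}\subseteq\overline{B}$: given $y\prec A$ and $t\leq y$, pick $z\in A$ and $u\in P$ with $u\leq t$, $u\leq z$; then $e(u)\leq e(z)\leq x\leq x'$, and since $x'\prec e(B)$ there is $e(v)\in e(B)$ with $e(u)\circ e(v)$, whence $u\circ v$ by $\circ$-reflection and $t\circ v$; so $y\prec B$. This is exactly the argument inside Lemma \ref{qwelldefined}. From $\overline{A}\subseteq\overline{B}$ one reads off $q(x)\sqsubseteq q(x')$ by cases according to whether $A$, $B$ have sums in $P$, using the definition clauses (s1)–(s4) of $\sqsubseteq$ — e.g. if neither has a sum this is clause (s3); if $A$ has a sum $y_A\in P$ then $\overline{A}=\downarrow\!y_A$ so $y_A$ is a lower... upper bound matter, and clause (s2) or (s1) applies; and so on.

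For surjectivity: every $y\in P$ is hit, since $q(e(y))=y$. For a point $s_{\overline{A}}\in S(P)$ with $A$ having no sum in $P$, since $M$ is sum-complete there is $x\in M$ that is a sum of $e(A)$, hence a fusion of $e(A)$; then $\overline{A_x}=\overline{A}$ by Lemma \ref{qwelldefined}, and $A$ has no sum in $P$, so $q(x)=s_{\overline{A}}$. This uses nothing beyond what is already available.

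The point where the hypothesis of \emph{weak supplementation} must enter — and what I expect to be the main obstacle — is in showing that the cases are genuinely dichotomous and that $q$ is well-defined as stated, namely that if $A_x$ has a sum $y$ in $P$ then necessarily $x=e(y)$ (so that setting $q(x)=x$ is legitimate) and, conversely, pinning down when two fusions in $M$ of different subsets of $P$ should be sent to the \emph{same} point versus different points. Weak supplementation of $P$ is what prevents the pathology in the preceding Example, where a would-be new element has nothing of $P$ below it; it should be used to show that whenever $x\in M$ is a fusion of $e(A)$ with $A$ summable in $P$, the greatest sum $y$ of $A$ satisfies $\downarrow^M\!x = \downarrow^M\!e(y)$ — if $z\leq x$ with $z\not\leq e(y)$, weak supplementation in $M$ (inherited? no — rather, density of $e$ plus weak supplementation of $P$) produces an element of $e(P)$ below $z$ and disjoint from $e(y)$, contradicting $y$ being a sum. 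I would isolate this as the delicate lemma and handle the rest of the bookkeeping by the routine case analysis on clauses (s1)–(s4) sketched above.
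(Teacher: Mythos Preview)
Your overall plan---extract $\overline{A_x}$ from $x$ via Lemma~\ref{qwelldefined} and use it to define $q$---is the paper's, and your monotonicity argument (that $x\leq x'$ in $M$ forces $\overline{A}\subseteq\overline{B}$) is correct and matches the paper's hardest case. The gap is exactly where you suspected, but it is worse than you think: the lemma you isolate as ``the main obstacle'' is simply \emph{false}. Take $P=\{a\}$ (vacuously weakly supplemented) and $M=\{e(a),m\}$ with $e(a)<m$. This is a sum-completion of $P$; $m$ is a sum of $e(\{a\})$; $\{a\}$ has sum $a$ in $P$; yet $m\neq e(a)$. Your sketched argument cannot be repaired: density of $e$ produces some $e(w)\leq z$, but nothing forces $e(w)\wr e(y)$, and weak supplementation of $P$ gives no leverage inside $M$ (the two-element chain $M$ above is not weakly supplemented). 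Two subsidiary claims also fail: ``sums are suprema here'' is unjustified since $M$ need not be separative, and ``$\overline{A_x}=\,\downarrow^P y$ for some sum $y$'' is \emph{not} equivalent to ``$A_x$ has a sum in $P$''---in the weakly supplemented poset $\{a,b,c,d\}$ with $c,d<a$ and $c,d<b$, the set $\{c,d\}$ has two incomparable sums $a,b$, but $\overline{\{c,d\}}=P$ has no maximum.

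The paper avoids the false lemma by splitting on whether $x\in e(P)$ rather than on whether $A_x$ has a sum: it sets $q(x)=e^{-1}(x)$ for $x\in e(P)$ and $q(x)=s_{\overline{A}}$ otherwise. Weak supplementation is then invoked \emph{only} in the monotonicity check, and in a different place from where you tried to use it: in the mixed case where $y\notin e(P)$ is a sum of $e(A)$ and $y\leq e(x)$ for some $x\in P$, one argues that $x$ is the \emph{greatest} sum of $A\cup\{x\}$ in $P$ (if $z>x$ were another sum, weak supplementation of $P$ would give $t\leq z$ with $t\wr x$, contradicting $z\prec A\cup\{x\}$), whence $\overline{A}\subseteq\overline{A\cup\{x\}}\leq x$ and so $s_{\overline{A}}\sqsubseteq x$ by clause~(s4). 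That is the actual role of weak supplementation in this proof.
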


\begin{proof}
Let $q$ map each $x\in e(P)$ to $e^{-1}(x)$. If $x\in M\setminus e(P)$, there exists $A\subseteq P$ such that $x$ is a sum of $e(A)$. In that case, let  and $q(x)=s_{\overline{A}}$. The fact that $q$ is well-defined follows from Lemma \ref{qwelldefined}.

In order to check that $q$ onto, let $\varnothing \neq A\subseteq P$ be with no sums in $P$.  By Lemma \ref{closureprops} (6), we know that $\overline{A}$ has no sums in $P$ neither. Let $x$ be a sum of  $e(\overline{A})$ in $M$. Since $e$ is sum-reflecting then $e(x)\not\in e(P)$. Hence $q(x)=s_{\overline{A}}$.

It only remains to check that $q$ is order-preserving. From the fact that $e$ is an order-embedding follows that the restriction to $e(P)$ of $q$ is also an order-embedding. Let $x\in P$,  $y, z\in M\setminus e(P)$ and $A,B\subseteq P$ be such that $y$ is a sum of $e(A)$ and $z$ is a sum of $e(B)$. If $e(x)\leq y$, then $e(x)\prec e(A)$. Since $e$ is $\circ$-reflecting, $x\prec A$. Therefore $x\in \overline{A}$ and consequently $x\sqsubseteq s_{\overline{A}}$.

If $y\leq e(x)$ then $A\leq x$. Obviously $x$ is a sum of $A\cup \{x\}$. Furthermore, $x$ is the greatest sum of $A\cup \{x\}$. Indeed, if $z$ is a sum of $A\cup \{x\}$ such that $z\geq x$, then by weak supplementation $z=x$ as $t\circ x$ for any $t\leq z$. Therefore
\[
\overline{A}\subseteq \overline{A\cup\{x\}}\leq x
\] and consequently $s_{\overline{A}}\sqsubseteq x$.

Finally, assume that $y\leq z$. By (s3) and Lemma \ref{closureprops} (5), we only have to check that $u\prec B$ for any $u\in A$. If $t\leq u$ then 
\[
e(t)\leq e(u)\leq y\leq z.
\]
Therefore there exists $v\in B$ such that $e(t)\circ e(v)$ and since $e$ is $\circ$ reflecting we have that $t\circ v$. Consequently $u\prec B$. It follows that $\overline{A}\subseteq \overline{B}$ and as a result we have that $s_{\overline{A}}\sqsubseteq s_{\overline{B}}$. Accordingly $q$ is monotone.  
\end{proof}

\begin{Theorem}Let $P$ be a poset  and let $e\colon P\to M$ be a sum-completion of $P$. If $g\colon S(P) \to M$ is a sum-preserving monotone onto map such that
\[
\xymatrix{
P\ar[r]^e\ar[dr]_\sigma&M\\
&S(P)\ar@{>>}[u]_g
}
\]
commutes, then $g$  is an isomorphism.
\end{Theorem}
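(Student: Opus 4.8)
The strategy is to prove that $g$ is order-reflecting: since $g$ is assumed monotone and surjective, this is enough, because an order-reflecting map is injective, so $g$ becomes a monotone bijection whose inverse is again monotone, i.e.\ an order-isomorphism.

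First I would assemble the tools. Because $g\circ\sigma=e$ and $\sigma$ is the inclusion, $g$ restricts to $e$ on $P$; in particular $g(\overline A)=e(\overline A)$ for every $A\subseteq P$, and, being a dense order-embedding, $e$ is $\circ$-reflecting and order-reflecting by Lemma~\ref{denseemb}. Next, recall from the proof that $\sigma$ is a sum-completion that each generator $s_{\overline A}$ is a sum of $\overline A$ in $S(P)$; since $g$ is sum-preserving, $g(s_{\overline A})$ is a sum of $e(\overline A)$ in $M$, so $g(s_{\overline A})$ is an upper bound of $e(\overline A)$ and satisfies $g(s_{\overline A})\prec e(\overline A)$. (Monotonicity of $g$ alone already forces $g(s_{\overline A})$ to be an upper bound of $e(\overline A)$; it is sum-preservation that promotes it to a sum, and this is the only place the extra hypothesis on $g$ is used.)

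The heart of the matter is to establish $g(x)\leq g(y)\implies x\sqsubseteq y$ for all $x,y\in S(P)$, by a case analysis following clauses (s1)--(s4). If $x,y\in P$, order-reflection of $e$ gives $x\leq y$. If $x=p\in P$ and $y=s_{\overline A}$, then $e(p)\leq g(s_{\overline A})$, and since $g(s_{\overline A})\prec e(\overline A)$ and enclosure is inherited downwards (Lemma~\ref{closureprops}(2), applied in $M$) we obtain $e(p)\prec e(\overline A)$; transferring this across $e$ exactly as in the proof of Lemma~\ref{qwelldefined}---for $q\leq p$ in $P$ one has $e(q)\leq e(p)$, hence $e(q)\circ e(r)$ for some $r\in\overline A$, hence $q\circ r$ by $\circ$-reflection---yields $p\prec\overline A$, so $p\in\overline{\overline A}=\overline A$ by Lemma~\ref{closureprops}(3), i.e.\ $p\sqsubseteq s_{\overline A}$ by (s2). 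If $x=s_{\overline A}$ and $y=p\in P$, then $e(\overline A)\leq g(s_{\overline A})\leq e(p)$, so $e(p)$ is an upper bound of $e(\overline A)$, whence $\overline A\leq p$ by order-reflection of $e$ and $s_{\overline A}\sqsubseteq p$ by (s4). Finally, if $x=s_{\overline A}$ and $y=s_{\overline B}$, then for each $a\in\overline A$, since $g(s_{\overline A})$ is an upper bound of $e(\overline A)$, we have $e(a)\leq g(s_{\overline A})\leq g(s_{\overline B})$; as $g(s_{\overline B})\prec e(\overline B)$, the same transfer argument gives $a\prec\overline B$, hence $a\in\overline B$, so $\overline A\subseteq\overline B$ and $s_{\overline A}\sqsubseteq s_{\overline B}$ by (s3). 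This proves $g$ order-reflecting, and the theorem follows.

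I expect the only genuine obstacle to be the two instances of the enclosure-transfer step, but these are a rerun of Lemma~\ref{qwelldefined}: the enclosure condition is only ever tested against elements of $P$ lying below a fixed element, so density of $e$ plays no role there and $\circ$-reflection suffices. Everything else is bookkeeping of the four cases against the definition of $\sqsubseteq$.
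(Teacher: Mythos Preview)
Your proof is correct, and the core mechanism matches the paper's: sum-preservation of $g$ makes $g(s_{\overline A})$ a sum of $e(\overline A)$ in $M$, and $\circ$-reflection of $e$ then transfers enclosure relations back to $P$. The difference is one of thoroughness. The paper argues only for injectivity, and even then treats just the pair $s_{\overline A},s_{\overline B}$: from $\overline A\not\subseteq\overline B$ it produces some $x\in\overline A$ disjoint from every element of $\overline B$, observes $e(x)\leq g(s_{\overline A})$ but $e(x)\not\leq g(s_{\overline B})$, and stops. The mixed cases ($p$ versus $s_{\overline A}$) and, more to the point, the monotonicity of $g^{-1}$ are left to the reader. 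Your four-case analysis against (s1)--(s4) establishes order-reflection outright, which simultaneously yields injectivity and makes the inverse monotone; in that sense your argument is strictly more complete than the paper's and closes a small gap it glosses over.
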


\begin{proof}
As $g$ is onto, we only need to check that it is also injective. Let $A, B\subseteq P$ be such that $\overline{A}\not\subseteq \overline{B}$. That is, there exists $x\prec \overline{A}$ such that $x\wr y$ for all $y\in \overline{B}$. As $e$ is $\circ$-reflecting, this implies that $e(x)\wr e(y)$ for all $y\in \overline{B}$.  If $g$ is sum-preserving, $g(s_{\overline{A}})$ is a sum of $e(\overline{A})$ and $g(s_{\overline{B}})$ is a sum of $e(\overline{B})$. Accordingly $e(x)$ cannot be below $g(s_{\overline{B}})$ since otherwise $g(s_{\overline{B}})$  would not be enclosed by $e(\overline{B})$. However we have that $e(x)\leq g(s_{\overline{A}})$ as $x\in \overline{A}$. Therefore $g(s_{\overline{A}})\neq g(s_{\overline{B}})$. We conclude that $g$ is injective.
\end{proof}

\section{Fusion-completions of arbitrary posets} We turn our attention to fusion-completions now. Poset axioms plus the second-order axiom asserting the existence of a unique sum for each non-empty collection entail strong supplementation. Thus a non-separative poset $P$ cannot have a sum-completion $M$ in which every $\varnothing \neq A\subseteq M$ has a unique sum, as $M$ would be a Boolean algebra  and a non-separative poset could not be densely embedded into it \cite{Jech2003}. The situation is quite different in the case of fusion-completions. Poset axioms plus the second-order axiom asserting the existence of a unique fusion are not strong enough to axiomatize Closed Classical Mereology \cite[Proposition 3.9]{PIETRUSZCZAK2018a}. Accordingly, a non-separative poset $P$ might have a fusion-completion $M$ in which every nonempty subset has a unique fusion, since $M$ may not be separative. We believe that this makes the study of fusion-completions of non-separative posets especially interesting.

First we show that  every poset has a fusion-completion. In fact, in this section we present two alternative constructions that work for a general poset $P$. Let
\[
F(P)=P\cup\{f_A\mid \varnothing\neq A\text{  has no fusion in }P\}
\]
with $P$ ordered as it was and, for any $x\in F(P)$ and any nonempty subset $A\subseteq P$ without a fusion in $P$, let
\[
x\leq f_A\iff x\in \downarrow\! A.
\]
Here $f$ in $f_A$ makes reference to a missing fusion of $A$ in $P$ and, again, for simplicity, we assume that the set
\[
\left\{f_{\overline{A}}\mid \varnothing\neq A\text{ has no fusion in }P\right\}
\]
 has empty intersection with $P$. Further by this description of the order relation, we mean that if $A\neq B$ then $f_{A}$ and $f_{B}$ are incomparable in $F(P)$.

\begin{Theorem} \label{Ffuscomp}The inclusion map $\gamma\colon P\hookrightarrow F(P)$ is a fusion-completion of $P$.
\end{Theorem}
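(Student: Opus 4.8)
The plan is to verify the four conditions in the definition of a fusion-completion: that $\gamma$ is an order-embedding, that $\gamma$ is $\circ$-reflecting, that $F(P)$ is fusion-complete, and that $\gamma$ is fusion-dense and fusion-preserving. The first is immediate from the definition of the order on $F(P)$, since $P$ sits inside $F(P)$ with its original order and no new relations are imposed among elements of $P$.

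For $\circ$-reflecting, I would take $x,y\in P$ with $\gamma(x)\circ\gamma(y)$ in $F(P)$ and find a common lower bound $t$. The only subtlety is that $t$ could a priori be one of the new elements $f_A$; but if $f_A\leq x$ and $f_A\leq y$ in $F(P)$ then by the definition of the order no such relation $f_A\leq z$ for $z\in P$ was introduced (new elements only have old elements \emph{below} them, via $\downarrow\!A$, not above them except other $f_B$'s), so in fact a common lower bound in $P$ must already exist, giving $x\circ y$. I expect this bookkeeping about which comparabilities the order on $F(P)$ does and does not create to be the main thing to get right throughout the proof.

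To show $F(P)$ is fusion-complete, let $\varnothing\neq\mathcal{A}\subseteq F(P)$, write $\mathcal{A}=\{f_{A_i}\}_{i\in I}\cup B$ with $B\subseteq P$, and set $A=\bigcup_{i\in I}(\downarrow\!A_i)\cup B\subseteq P$ (using that $\downarrow\!f_{A_i}\cap P=\downarrow\!A_i$). I would argue that an element of $F(P)$ is a fusion of $\mathcal{A}$ exactly when it is a fusion of $A$: overlap with $f_{A_i}$ reduces to overlap with some element of $\downarrow\!A_i\subseteq A$, using $\circ$-reflection, and conversely. Then: if $A$ has a fusion $x$ in $P$, that $x$ works; if not, $f_A$ is by construction a fusion of $\downarrow\!A$ (hence of $A$, hence of $\mathcal{A}$) in $F(P)$ — here one checks that for $y\in F(P)$, $y\circ f_A$ iff $y$ has a common lower bound with some element of $\downarrow\!A$, again splitting on whether $y\in P$ or $y=f_B$. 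Some care is needed in the edge case where $A$ coincides, up to the relevant closure, with one of the $A_i$ or turns out to have a fusion in $P$ after all; I would handle this by always reducing to the canonical down-set $\downarrow\!A$.

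Finally, fusion-density is immediate: each $f_A$ is a fusion of $\gamma(\downarrow\!A)$ by the previous paragraph, and each $x\in P$ is trivially a fusion of $\gamma(\downarrow\!x)$. For fusion-preservation, if $x\in P$ is a fusion of $A\subseteq P$ in $P$, then $\gamma(x)$ is a fusion of $\gamma(A)$ in $F(P)$: by Lemma~\ref{closureprops}-style reasoning on the overlap relation together with $\circ$-reflection, overlapping $\gamma(x)$ in $F(P)$ is equivalent to overlapping some $\gamma(z)$, $z\in A$ — the only new case being a potential witness $f_B$, which one rules out by the structure of the order. This completes the verification that $(\F(P),\gamma)$, i.e. $(F(P),\gamma)$, is a fusion-completion.
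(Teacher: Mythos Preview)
Your proposal is correct and follows essentially the same route as the paper: reduce overlap questions in $F(P)$ to overlap in $P$ via the observation that no $f_A$ lies below any element of $P$, show each $f_A$ is a fusion of $A$, and handle an arbitrary $\mathcal{A}\subseteq F(P)$ by replacing the new elements with the underlying subsets of $P$ and treating the two cases (fusion exists in $P$ / does not). One small inaccuracy to fix in your write-up: distinct $f_A$ and $f_B$ are incomparable in $F(P)$, not one above the other, and your appeal to ``Lemma~\ref{closureprops}-style reasoning'' is misplaced (that lemma concerns the sum-closure $\overline{A}$), though neither slip affects the argument.
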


\begin{proof}
First, we check that $\gamma$ is fusion-preserving.  Let $x\in P$ be a fusion in $P$ of a nonempty subset $A\subseteq P$. Note that 
\[
\downarrow^{F(P)}\! x=\downarrow^P\! x\quad\text{and}\quad\downarrow^P\! A=\downarrow^{F(P)}\!A.
\]
Consequently, we have that
\[
\begin{aligned}
\mathsf{O}^{F(P)}(x)&=\uparrow^{F(P)}\!\downarrow^{F(P)}\! x\\
&=\uparrow^{F(P)}\!\downarrow^{P}\! x\\
&=\uparrow^{F(P)}\!\downarrow^{P}\! A\\
&=\uparrow^{F(P)}\!\downarrow^{F(P)}\! A\\
&=\mathsf{O}^{F(P)}(A).
\end{aligned}
\]
Therefore $x$ is the fusion of $A$ in $F(P)$.

Since any element $p\in P$ is a fusion of $\{p\}$, in order to check that $\gamma$ is fusion-dense, it is sufficient to check that for any nonempty $A\subseteq P$ that has no fusions in $P$, $f_A$ is a fusion of $A$ in $F(P)$. As 
\[
\downarrow\! f_A=(\downarrow\! A)\cup\{f_A\}
\]
 we have that 
\[
\mathsf{O}^{F(P)}(f_A)=\mathsf{O}^{F(P)}(A).
\]
Therefore $f_A$ is a fusion of $A$ in $F(P)$.

To check that $F(P)$ is fusion-complete, let $\varnothing\neq A\subseteq F(P)$. Without loss of generality, we can assume that $A$ is of the form
\[
A=\{f_{A_i}\}_{i\in I}\cup B
\]
with $\varnothing \neq A_i\subseteq P$ for each $i\in I$ and $B\subseteq P$, where either $I$ or $B$ may be empty, but not both. Let
\[
C=\tbigcup_{i\in P}A_i\cup B.
\]
Clearly, we have that 
\[
\mathsf{O}^{F(P)}(A)=\mathsf{O}^{F(P)}(C).
\]
Therefore $C$ and $A$ share fusions in $F(P)$.
If $C$ has a fusion $x$ in $P$, then, as $\gamma$ preserves fusions, $x$ is a fusion of $C$ in $F(P)$. Consequently  $x$ is a fusion of $A$. If $C$ has no fusions $P$, then we already know that $f_C$ is a fusion of $C$ in $F(P)$. In summary, $F(P)$ is fusion-complete.

It is straightforward to check that  $\gamma$ is $\circ$-reflecting, since, by the definition of $F(P)$, for any $p\in P$ and $x\in F(P)$, if $x\leq p$ then $x\in P$. Obviously $\gamma$ is an order-embedding. 
\end{proof}

\begin{Example}

The completion presented above and the one described in \S\ref{completion1strongly} do not coincide in general. As an example, consider the three element antichain $P=\{a, b, c\}$ were the $\leq$ is the identity. $P$ is obviously strongly supplemented. One has that $\mathsf{O}(A)=A$ for any nonempty $A\subseteq P$. Therefore, $\mathcal{O}P$ is precisely the powerset of $P$ without the empty set $\varnothing$, ordered by inclusion, as depicted in Figure \ref{O(P)} (i).  Figure \ref{O(P)} (ii) depicts $F(P)$. Note that in $F(P)$ the fusion of $\{a, b, c\}$ is no longer the  top element of the poset.

\begin{figure}[tb]
\[
\xymatrix{
\textrm{(i)}&\{a, b, c\}\ar@{--}[dl]\ar@{--}[dr]\ar@{--}[d]&\\
\{a, b\}\ar@{--}[d]\ar@{--}[dr]&\{a,c\}\ar@{--}[dl]\ar@{--}[dr]&\{b,c\}\ar@{--}[dl]\ar@{--}[d]\\
\{a\}&\{b\}&\{c\}
}
\qquad
\xymatrix{
\textrm{(ii)}&f_{\{a, b, c\}}\ar@{--}[ddl]\ar@{--}[ddr]\ar@{--}@/^1.3pc/[dd]&\\
f_{\{a, b\}}\ar@{--}[d]\ar@{--}[dr]&f_{\{a,c\}}\ar@{--}[dl]\ar@{--}[dr]&f_{\{b,c\}}\ar@{--}[dl]\ar@{--}[d]&\\
a&b&c
}
\]
\caption{
}
\label{O(P)}
\end{figure}


%
\end{Example}

\begin{Example}
$F(P)$ is not usually a very frugal fusion-completion. For example consider the poset $P$ in Figure \ref{multcom}. It has four elements $a, b, c$ and $d$ and the only nontrivial parthood relations are $c\leq a$ and $c\leq b$.
\begin{figure}[tb]
\[
\xymatrix@C=2em{
a\ar@{-}[dr]&d&b\ar@{-}[dl]&\\
&c&
}
\]
\caption{}
\label{multcom}
\end{figure}
If we construct $F(P)$ to be a fusion-completion of $P$, we would add many more fusions than we strictly need. Note that the nonempty subsets $A$ of $P$ that have no fusions in $P$ are precisely those that contain $d$ and at least one element from the set $\{a, b,c\}$. Thus we have added to $P$ a new element $f_A$ for every one of those subsets that have no fusions in $P$ as shown in Figure \ref{multcom1}. One may think that this is excessive, specially after noting that  a single new element would have done the job. In fact, any of those $f_A$ is a fusion (in $F(P)$) of not only $A$ but any other subset that had no fusion in $P$. The reason for this is that all subsets of $P$ that do not have a fusion overlap with exactly the same elements in $P$ (all elements of $P$ in this case). 
\begin{figure}[tb]
\[
\xymatrix@C=1em@R=4em{
f_{\{a,d\}}\ar@{--}[drr]&f_{\{a,c,d\}}\ar@{--}[dr]&f_{\{a,b,d\}}\ar@{--}[d]\ar@{--}[drr]&f_{\{a,b,c,d\}}\ar@{--}[dr]\ar@{--}[dl]&f_{\{b,a,d\}}\ar@{--}[d]\ar@{--}[dll]&f_{\{b,c,d\}}\ar@{--}[dl]&f_{\{b,d\}}\ar@{--}[dll]&f_{\{c,d\}}\ar@{--}[ddllll]\\
&&a\ar@{-}[dr]&d\ar@{--}[ulll]\ar@{--}[ull]\ar@{--}[ul]\ar@{--}[u]\ar@{--}[ur]\ar@{--}[urr]\ar@{--}[urrr]\ar@{--}[urrrr]&b\ar@{-}[dl]&&&\\
&&&c&&&&
}
\]
\caption{}
\label{multcom1}
\end{figure}
\end{Example}

In order to avoid the annoyance of having so many unnecessary new fusions in our fusion-completion, we may refine our original construction. Let 
\[
G(P)=P\cup\{f_{\mathsf{O}(A)}\mid \varnothing\neq A\text{  has no fusion in }P\}
\]
with $P$ ordered as it was and, for any $x\in G(P)$ and any nonempty subset $A\subseteq P$ without a fusion in $P$, let
\[
x\leq f_{\mathsf{O}(A)}\iff x\in P \text{ and }\mathsf{O}(x)\subseteq \mathsf{O}(A).
\]
Again, by this we mean that $f_{\mathsf{O}(A)}$ and $f_{\mathsf{O}(B)}$ are incomparable unless $\mathsf{O}(A)=\mathsf{O}(B)$.

\begin{Lemma}\label{sharefusions}
Let $P$ be a poset and $x$ be a fusion of $A\subseteq G(P)$. Without any  loss of generality, we assume that $A$ of $G(P)$ is of the form
\[
A=\{f_{\mathsf{O}(A_i)}\}_{i\in I}\cup B
\]
with $\varnothing\neq A_i\subseteq P$ for each $i\in I$ and $B\subseteq P$ where either $I$ or $B$ might be empty, but not both. Let 
\[
C=\tbigcup_{i\in P}A_i\cup B.
\]
Then $A$ and $C$ share fusions.
\end{Lemma}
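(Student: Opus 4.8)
The plan is to show that a fusion of a set $A \subseteq G(P)$ depends only on the set of elements of $P$ that overlap (in $P$) elements of $A$, so that replacing the new symbols $f_{\mathsf{O}(A_i)}$ by the underlying subsets $A_i$ does not change which elements of $G(P)$ are fusions. Concretely, I would prove that $\mathsf{O}^{G(P)}(A) = \mathsf{O}^{G(P)}(C)$, and then invoke (as was done in the proof of Theorem~\ref{Ffuscomp}) the fact that two subsets of $G(P)$ with the same overlap-set have exactly the same fusions.

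First I would compute $\downarrow^{G(P)} f_{\mathsf{O}(A_i)}$. By the definition of the order on $G(P)$, an element $x \in P$ lies below $f_{\mathsf{O}(A_i)}$ exactly when $\mathsf{O}(x) \subseteq \mathsf{O}(A_i)$, i.e.\ every element overlapping $x$ overlaps some element of $A_i$; in particular $x$ itself overlaps some element of $A_i$, so $x \in \mathsf{O}(A_i)$. Conversely, if $x \leq z$ for some $z \in A_i$ then $\mathsf{O}(x) \subseteq \mathsf{O}(z) \subseteq \mathsf{O}(A_i)$, so $x \leq f_{\mathsf{O}(A_i)}$. Hence $\downarrow^{G(P)} f_{\mathsf{O}(A_i)} = \{x \in P \mid \mathsf{O}(x) \subseteq \mathsf{O}(A_i)\} \cup \{f_{\mathsf{O}(A_i)}\}$, and this set of $P$-elements has the same $\mathsf{O}^{G(P)}$-image as $A_i$ itself, since it sits between $\downarrow^P A_i$ and $\mathsf{O}(A_i)$ and $\uparrow\downarrow A_i = \uparrow\uparrow\downarrow A_i$. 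Note also that no new symbol $f_{\mathsf{O}(B)}$ lies below any $f_{\mathsf{O}(A_i)}$ or below any $x \in P$, since the order on $G(P)$ never puts a new element below anything other than an equal new element. From this it follows that
\[
\mathsf{O}^{G(P)}(A) = \bigcup_{i \in I} \mathsf{O}^{G(P)}(f_{\mathsf{O}(A_i)}) \cup \mathsf{O}^{G(P)}(B) = \bigcup_{i \in I} \mathsf{O}^{G(P)}(A_i) \cup \mathsf{O}^{G(P)}(B) = \mathsf{O}^{G(P)}(C).
\]

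Finally, having $\mathsf{O}^{G(P)}(A) = \mathsf{O}^{G(P)}(C)$, I would recall that $y$ is a fusion of a subset $D$ of a poset precisely when $\mathsf{O}(y) = \mathsf{O}(D)$ (which is just the unwinding of the definition of fusion together with the identity $\mathsf{O}(y) = \uparrow\downarrow y$); therefore $A$ and $C$ have exactly the same fusions in $G(P)$, which is the claim. The only mildly delicate point---the ``main obstacle'', such as it is---is the bookkeeping that shows no new element $f_{\mathsf{O}(B)}$ ever appears in $\downarrow^{G(P)} f_{\mathsf{O}(A_i)}$ or in $\mathsf{O}^{G(P)}$ of a subset of $P$; this is immediate from the stipulation that distinct new elements are incomparable and that new elements lie above only old elements, but it must be stated, since it is what guarantees that the computation of $\mathsf{O}^{G(P)}(A)$ reduces to a computation inside $P$ together with the at-most-one new element contributed by each $f_{\mathsf{O}(A_i)}$ itself.
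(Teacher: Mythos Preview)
Your proposal is correct and follows essentially the same approach as the paper: both arguments establish $\mathsf{O}^{G(P)}(f_{\mathsf{O}(A_i)})=\mathsf{O}^{G(P)}(A_i)$ for each $i$, deduce $\mathsf{O}^{G(P)}(A)=\mathsf{O}^{G(P)}(C)$, and conclude that $A$ and $C$ share fusions. The paper compresses the first step into a single ``Clearly'', whereas you unpack it via the sandwich $\downarrow^P A_i\subseteq\{x\in P\mid \mathsf{O}(x)\subseteq\mathsf{O}(A_i)\}\subseteq\mathsf{O}^P(A_i)$ together with the bookkeeping that new elements never lie below anything else; this extra detail is harmless and the arguments are otherwise identical.
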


\begin{proof}
Clearly, for each $i\in I$, $\mathsf{O}^{G(P)}(A_i)=\mathsf{O}^{G(P)}(f_{\mathsf{O}(A_i)})$. Hence
\[
\begin{aligned}
\mathsf{O}^{G(P)}(A)&=\tbigcup_{i\in I}\mathsf{O}^{G(P)}(f_{\mathsf{O}(A_i)})\cup\mathsf{O}^{G(P)}(B)\\
&=\tbigcup_{i\in I}\mathsf{O}^{G(P)}(A_i)\cup\mathsf{O}^{G(P)}(B)\\
&=\mathsf{O}^{G(P)}(C).
\end{aligned}
\]
Hence $A$ and $C$ share fusions in $G(P)$.
\end{proof}

\begin{Theorem}  \label{completionG(P)}The inclusion map $\delta\colon P\hookrightarrow G(P)$ is a fusion-completion of $P$. \end{Theorem}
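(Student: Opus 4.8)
The plan is to follow the template of the proof of Theorem~\ref{Ffuscomp}, adding the extra bookkeeping forced by the fact that each new point $f_{\mathsf{O}(A)}$ now lies above all of $\{x\in P\mid\mathsf{O}(x)\subseteq\mathsf{O}(A)\}$, a set that can be strictly larger than $\downarrow\!A$.

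First I would dispatch the routine structural facts. The relation on $G(P)$ is a partial order: reflexivity and antisymmetry are immediate (no new point lies below an old one, and two new points are comparable only when equal), and the only transitivity case needing an argument, $x\leq y\leq f_{\mathsf{O}(A)}$ with $x,y\in P$, follows from $\mathsf{O}(x)\subseteq\mathsf{O}(y)\subseteq\mathsf{O}(A)$. Since the order on $P$ is unchanged, $\delta$ is an order-embedding, and $\delta$ is $\circ$-reflecting for the same reason as for $F(P)$: anything $\leq p\in P$ in $G(P)$ already lies in $P$, so a common lower bound of $\delta(x)$ and $\delta(y)$ lives in $P$.

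The core of the proof is the identity
\[
\mathsf{O}^{G(P)}(f_{\mathsf{O}(A)})=\mathsf{O}^{G(P)}(A)
\]
for every nonempty $A\subseteq P$ without a fusion in $P$; this says exactly that $f_{\mathsf{O}(A)}$ is a fusion of $A$ in $G(P)$, and together with the observation that each $p\in P$ is a fusion of $\{p\}$ it yields fusion-density. For the inclusion ``$\supseteq$'' I would note that $a\leq f_{\mathsf{O}(A)}$ for every $a\in A$ (as $\mathsf{O}(a)\subseteq\mathsf{O}(A)$) and invoke monotonicity of $\mathsf{O}^{G(P)}$. For ``$\subseteq$'', if $w\circ f_{\mathsf{O}(A)}$ then a common lower bound $s$ of $w$ and $f_{\mathsf{O}(A)}$ must lie in $P$, so $\mathsf{O}(s)\subseteq\mathsf{O}(A)$; since $s\in\mathsf{O}(s)$ this forces $s$ to overlap some $a\in A$ in $P$, say via $t\leq s$ with $t\leq a$, and then $t$ witnesses $w\circ a$. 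Fusion-preservation is proved in the same spirit: if $x$ is a fusion of $A$ in $P$, i.e.\ $\mathsf{O}^P(x)=\mathsf{O}^P(A)$, then $\mathsf{O}^{G(P)}(x)$ and $\mathsf{O}^{G(P)}(A)$ agree on $P$ by $\circ$-reflection, while a new point $f_{\mathsf{O}(B)}$ overlaps $x$ (resp.\ some $a\in A$) in $G(P)$ iff some $s\leq x$ (resp.\ $s\leq a$) satisfies $\mathsf{O}(s)\subseteq\mathsf{O}(B)$; the equality $\mathsf{O}^P(x)=\mathsf{O}^P(A)$ lets one transport such a witness back and forth exactly as above.

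Finally, fusion-completeness of $G(P)$ follows from Lemma~\ref{sharefusions} in the usual way: given $\varnothing\neq A\subseteq G(P)$, write $A=\{f_{\mathsf{O}(A_i)}\}_{i\in I}\cup B$ with $B\subseteq P$, put $C=\bigcup_{i\in I}A_i\cup B$, and note that $A$ and $C$ share fusions in $G(P)$; if $C$ has a fusion in $P$ then fusion-preservation makes it a fusion of $A$, and otherwise $f_{\mathsf{O}(C)}\in G(P)$ is a fusion of $C$, hence of $A$, by the displayed identity. I expect the only genuine friction to be keeping careful track of how the new points sit inside the overlap-closures — concretely, the step that $\mathsf{O}(s)\subseteq\mathsf{O}(A)$ forces $s$ to overlap $A$ — which is the one place where the enlarged down-sets of $G(P)$ behave differently from those of $F(P)$; the remaining steps run parallel to the proof of Theorem~\ref{Ffuscomp}.
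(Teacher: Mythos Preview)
Your proposal is correct and follows essentially the same approach as the paper: both establish fusion-density via the identity $\mathsf{O}^{G(P)}(f_{\mathsf{O}(A)})=\mathsf{O}^{G(P)}(A)$ and derive fusion-completeness by invoking Lemma~\ref{sharefusions} to reduce an arbitrary $A\subseteq G(P)$ to a subset $C\subseteq P$, with fusion-preservation and the $\circ$-reflecting order-embedding handled just as for $F(P)$. One tiny imprecision: your claim that a common lower bound $s$ of $w$ and $f_{\mathsf{O}(A)}$ ``must lie in $P$'' fails when $w=f_{\mathsf{O}(A)}$ itself, but in that case any $a\in A$ serves as a witness in $P$, so the argument goes through unchanged.
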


\begin{proof}
The fact that $\delta$ is fusion-preserving can be shown as in the proof of Theorem \ref{Ffuscomp}. Similarly, in order to check that $\delta$ is fusion-dense, we only need to show that any nonempty subset $A\subseteq P$ that has no fusions in $P$ has $f_{\mathsf{O}(A)}$ as a fusion in $G(P)$. It is easily seen that
\[
\begin{aligned}
\mathsf{O}^{G(P)}(f_{\mathsf{O}(A)})&=\tbigcup\{\uparrow^{G(P)}\!\downarrow^{G(P)}\!x\mid x\leq f_{\mathsf{O}(A)}\}\\
&=\tbigcup\{\uparrow^{G(P)}\!\downarrow^{G(P)}\!x\mid  x\in P \text{ and }  \uparrow^{P}\!\downarrow^{P}\!x\subseteq\uparrow^{P}\!\downarrow^{P}\!A\}\\
&= \uparrow^{G(P)}\!\downarrow^{G(P)}\!A.
\end{aligned}
\]
Therefore $f_{\mathsf{O}(A)}$ is a fusion of $A$ in $G(P)$.

To check that $G(P)$ is fusion-complete, let $\varnothing\neq A\subseteq G(P)$. Again, without loss of generality, we will assume that  $A$ is of the form
\[
A=\{f_{\mathsf{O}(A_i)}\}_{i\in I}\cup B
\]
with $\varnothing\neq A_i\subseteq P$ for each $i\in I$ and $B\subseteq P$ where either $I$ or $B$ might be empty, but not both. Let
\[
C=\tbigcup_{i\in P}A_i\cup B.
\]
By Lemma \ref{sharefusions} we have that $A$ and $C$ share fusions in $G(P)$. If $C$ has a fusion $x$ in $P$, so is in $G(P)$, since $\delta$ is fusion-preserving.  Then $x$ is a fusion of $A$. If $C$ has no fusions $P$, then we already know that $f_{\mathsf{O}(C)}$ is a fusion of $C$ in $G(P)$. We have that $f_{\mathsf{O}(C)}$ is a fusion of $A$. In summary, $G(P)$ is fusion-complete.

Like in the case of $F(P)$, it is straightforward to check that  $\delta$ is an $\circ$-reflecting order-embedding. 
\end{proof}

The following result shows that the extension of $P$ into $G(P)$ actually adds only the missing fusions.
\begin{Proposition}\label{fewfusions}
Let $x$, $A$ and $C$ be as in Lemma \ref{sharefusions}.
Then either $x$ is a fusion of $C$ in $P$ or $x=f_{\mathsf{O}(C)}$.
\end{Proposition}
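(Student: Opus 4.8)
The plan is to show that any fusion $x$ of $A$ in $G(P)$, where $A$, $C$ are as in Lemma~\ref{sharefusions}, is either a genuine fusion of $C$ living in $P$ or else is the newly added element $f_{\mathsf{O}(C)}$; in particular no \emph{other} new element $f_{\mathsf{O}(D)}$ can serve as such a fusion. By Lemma~\ref{sharefusions} we know $A$ and $C$ share fusions, so it suffices to analyse the fusions of $C$ in $G(P)$. First I would split into the two cases for $x$: either $x\in P$ or $x=f_{\mathsf{O}(D)}$ for some nonempty $D\subseteq P$ with no fusion in $P$.

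In the case $x\in P$: since $x$ is a fusion of $C$ in $G(P)$, we have $\mathsf{O}^{G(P)}(x)=\mathsf{O}^{G(P)}(C)$. Because $\delta$ is $\circ$-reflecting (established in Theorem~\ref{completionG(P)}) and is an order-embedding, overlap in $G(P)$ between elements of $P$ coincides with overlap in $P$; hence intersecting with $P$ gives $\mathsf{O}^{P}(x)=\mathsf{O}^{P}(C)$, i.e.\ $x$ is a fusion of $C$ in $P$. (One must be slightly careful: $\mathsf{O}^{G(P)}(C)$ also contains the new elements $f_{\mathsf{O}(A_i)}$ and possibly $f_{\mathsf{O}(C)}$, but since $x\in P$ and $\downarrow^{G(P)}x=\downarrow^{P}x$, the set $\mathsf{O}^{G(P)}(x)$ meets $P$ in exactly $\mathsf{O}^{P}(x)$, and the same bookkeeping for $C$ reduces the identity of overlap-sets to the level of $P$.)

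In the case $x=f_{\mathsf{O}(D)}$: from $x$ being a fusion of $C$ we again get $\mathsf{O}^{G(P)}(f_{\mathsf{O}(D)})=\mathsf{O}^{G(P)}(C)$, and intersecting with $P$ yields $\mathsf{O}^{P}(D)=\mathsf{O}^{P}(C)$ using the computation of $\mathsf{O}^{G(P)}(f_{\mathsf{O}(D)})$ from the proof of Theorem~\ref{completionG(P)} (namely $\mathsf{O}^{G(P)}(f_{\mathsf{O}(D)})\cap P=\uparrow^{P}\!\downarrow^{P}\!D$). But the element $f_{\mathsf{O}(D)}$ is, by construction of $G(P)$, completely determined by the set $\mathsf{O}(D)=\mathsf{O}^{P}(D)$; two such generators with equal overlap-sets are literally the same element of $G(P)$. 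Since $\mathsf{O}^{P}(D)=\mathsf{O}^{P}(C)$, and $C$ has no fusion in $P$ (this is exactly the situation in which $f_{\mathsf{O}(C)}$ was introduced; in the other situation $C$ has a fusion in $P$ and the first case applies), we conclude $f_{\mathsf{O}(D)}=f_{\mathsf{O}(C)}$, as desired.

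The main obstacle is the careful bookkeeping in passing between $\mathsf{O}^{G(P)}(-)$ and $\mathsf{O}^{P}(-)$: one has to verify that the new elements of $G(P)$ do not contribute "extra" overlaps that would break the reduction, which is precisely where $\circ$-reflectivity of $\delta$ and the explicit description $x\leq f_{\mathsf{O}(A)}\iff x\in P\text{ and }\mathsf{O}(x)\subseteq\mathsf{O}(A)$ do the work. Once that is in hand, the rest is the observation that the label $f_{\mathsf{O}(D)}$ is an injective function of $\mathsf{O}(D)$, so equality of overlap-sets forces equality of the corresponding new elements.
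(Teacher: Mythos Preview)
Your proposal is correct and follows essentially the same route as the paper: split on whether $x\in P$ or $x=f_{\mathsf{O}(D)}$, use that $\delta$ is $\circ$-reflecting (the paper phrases this as ``fusion-reflecting'') to pass from $\mathsf{O}^{G(P)}(-)$ to $\mathsf{O}^{P}(-)$ by intersecting with $P$, and in the second case conclude $\mathsf{O}^P(D)=\mathsf{O}^P(C)$ so that $f_{\mathsf{O}(D)}=f_{\mathsf{O}(C)}$. Your extra remark that $C$ must lack a fusion in $P$ (so that $f_{\mathsf{O}(C)}$ is actually an element of $G(P)$) is a point the paper leaves implicit.
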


\begin{proof}
If $x\in P$, then
\[
\mathsf{O}^{G(P)}(x)=\mathsf{O}^{G(P)}(C).
\]
Accordingly, as $\gamma$ is fusion-reflecting,
\[
\mathsf{O}^P(x)=\mathsf{O}^{G(P)}(x)\cap P=\mathsf{O}^{G(P)}(C)\cap P=\mathsf{O}^P(C),
\]
that is, $x$ is a fusion of $C$ in $P$.

If $x\not\in P$, then $x=f_{\mathsf{O}(D)}$ for some $\varnothing\neq D\subseteq P$. From the proof of Theorem \ref{completionG(P)} we know that $x$  is a fusion of $D$ in $G(P)$. Hence
\[
\mathsf{O}^{G(P)}(D)=\mathsf{O}^{G(P)}(x)=\mathsf{O}^{G(P)}(C).
\]
In consequence, as $\gamma$ is fusion-reflecting, we have that
\[
\mathsf{O}^{P}(D)=\mathsf{O}^{G(P)}(D)\cap P=\mathsf{O}^{G(P)}(C)\cap P=\mathsf{O}^{P}(C).
\]
Therefore $x=f_{\mathsf{O}(D)}=f_{\mathsf{O}(C)}$.
\end{proof}

\begin{Example} Let $P$ be again the poset in Figure \ref{multcom}. $G(P)$ (Figure \ref{GP} (i)) adds a single new element, $f_{P}$. This element is the top element of the completion. However note that this is not strictly necessary. Indeed $f_P$ needs to overlap any element of $P$ but for this it is sufficient to be above $d$ and $c$ as shown in Figure \ref{GP} (ii).
\begin{figure}[tb]
\[
\xymatrix@C=2em{
\textrm{(i)}&f_P\ar@{--}[dl]\ar@{--}[d]\ar@{--}[dr]&\\
a\ar@{-}[dr]&d&b\ar@{-}[dl]&\\
&c&
}
\qquad
\xymatrix@C=2em{
\textrm{(ii)}&f_P\ar@{--}@/_1pc/[dd]\ar@{--}[d]&\\
a\ar@{-}[dr]&d&b\ar@{-}[dl]&\\
&c&
}
\]
\caption{}
\label{GP}
\end{figure}
\end{Example}

\section{When does a poset have a least fusion-completion?}

$G(P)$ is, in some sense, the smallest fusion-completion of $P$: it adds a fusion for every nonempty subset of $P$ that lacks one and it does it in the most economical way. Those subsets of $P$ that could share a fusion, do share a fusion in $G(P)$---the subsets that can share a fusion are precisely those that overlap the same elements in $P$. Consequently, we cannot go any further concerning the size of the underlying set of a completion. However, we can usually make the order relation smaller. In fact, the following example shows that in the general case there is no lower bound for how small we can make the order relation. 
\begin{Example}
Let $P$ be a poset with elements $\{a_n,b_n\}_{n\in\N}$ where $a_n\leq a_m$ and $b_n\leq b_m$ iff $n\geq m$ and $a_n$ and $b_m$ are incomparable for any $n,m\in\N$ . In this case, $G(P)$ adds a single new element, $f_P$,  to be the top of the resulting structure. However, in order for $G(P)$ to be a fusion-completion of $P$, $f_P$ does not need to be the top element. Let $M_{n_0,m_0}$ be a poset with the same elements as $G(P)$ but in which $f_P\geq a_n$ iff $n\geq n_0$ and $f_P\geq b_m$ iff $m\geq m_0$ as in Figure \ref{twochains}. Obviously, the inclusion of $P$ into $M_{n_0,m_0}$ is also a fusion-completion and the identity map from $M_{n_0,m_0}$ to $G(P)$ is an order-embedding that obviously leaves $P$ fixed.
\begin{figure}[tb]
\[
\xymatrix@R=1em{a_0\ar@{-}[d]&f_P\ar@{--}[dddl]\ar@{--}[dddr]&b_0\ar@{-}[d]\\
a_1\ar@{-}[d]&&b_1\ar@{-}[d]\\
\vdots\ar@{-}[d]&&\vdots\ar@{-}[d]\\
a_n\ar@{-}[d]&&b_m\ar@{-}[d]\\
\vdots&&\vdots
}
\]
\caption{
}
\label{twochains}
\end{figure}
\end{Example}

 Recall that an \emph{atom} in a poset $P$ is an element $x\in P$ with no other elements below it, that is, $x=y$ whenever $y\leq x$, and that a poset $P$ is said to be \emph{atomic} if every element has an atom below it.  We will denote by $\operatorname{Atoms}(P)$ the collection of atoms of $P$. Let $H(P)$ be a poset with the same underlying set as $G(P)$ but with a weaker order relation $\sqsubseteq$:
 \[
 x\sqsubseteq f_{\mathsf{O}(A)}\iff x\in\operatorname{Atoms}(P)\text{ and }\mathsf{O}(x)\subseteq \mathsf{O}(A).
 \]
Note that, equivalently, $x\sqsubseteq f_{\mathsf{O}(A)}$ if and only if $x$ is an atom and there exists $y\in A$ such that $x\leq y$.

 \begin{Lemma}\label{sharefusionsAtomic}
Let $P$ be a poset and $x$ be a fusion of $A\subseteq H(P)$. Without any  loss of generality, any nonempty subset $A$ of $H(P)$ is of the form
\[
A=\{f_{\mathsf{O}(A_i)}\}_{i\in I}\cup B
\]
with $\varnothing\neq A_i\subseteq P$ for each $i\in I$ and $B\subseteq P$ where either $I$ or $B$ might be empty, but not both. Let 
\[
C=\tbigcup_{i\in P}A_i\cup B.
\]
Then $A$ and $C$ share fusions.
\end{Lemma}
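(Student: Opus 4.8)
The plan is to follow the proof of Lemma~\ref{sharefusions} almost verbatim, reducing the statement to a single equality of overlap-sets. Recall that $x\in H(P)$ is a fusion of a subset $S\subseteq H(P)$ exactly when $\mathsf{O}^{H(P)}(x)=\mathsf{O}^{H(P)}(S)$, so ``$A$ and $C$ share fusions'' is equivalent to $\mathsf{O}^{H(P)}(A)=\mathsf{O}^{H(P)}(C)$. Since $\mathsf{O}^{H(P)}$ sends unions to unions and $C=\tbigcup_{i\in I}A_i\cup B$ as in the statement, everything reduces to the single identity
\[
\mathsf{O}^{H(P)}(f_{\mathsf{O}(A_i)})=\mathsf{O}^{H(P)}(A_i)\qquad\text{for each }i\in I,
\]
after which $\mathsf{O}^{H(P)}(A)=\mathsf{O}^{H(P)}(C)$ drops out exactly as in Lemma~\ref{sharefusions}.

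The step where the argument genuinely differs from the $G(P)$ case, and where I expect the real work to lie, is this key identity: unlike for $G(P)$, it is \emph{not} immediate. In $G(P)$ one has $A_i\subseteq\downarrow^{G(P)}\!f_{\mathsf{O}(A_i)}$ outright, whereas in $H(P)$ the elements of $P$ below $f_{\mathsf{O}(A_i)}$ are only the atoms of $P$ lying under some member of $A_i$; so I would prove the identity by passing through atoms, and this is precisely where atomicity of $P$ is used (without it $H(P)$ need not even be fusion-complete). Writing $\mathsf{O}^{H(P)}(S)=\uparrow^{H(P)}\!\downarrow^{H(P)}\!S$, one has $\downarrow^{H(P)}\!A_i=\downarrow^{P}\!A_i$ (no new element lies below a member of $P$) and $\downarrow^{H(P)}\!f_{\mathsf{O}(A_i)}=\{f_{\mathsf{O}(A_i)}\}\cup\bigl(\operatorname{Atoms}(P)\cap\downarrow^{P}\!A_i\bigr)$, since $p\sqsubseteq f_{\mathsf{O}(A_i)}$ iff $p$ is an atom below some member of $A_i$. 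For $\subseteq$: every atom occurring in $\downarrow^{H(P)}\!f_{\mathsf{O}(A_i)}$ already lies in $\downarrow^{P}\!A_i\subseteq\mathsf{O}^{H(P)}(A_i)$, and $f_{\mathsf{O}(A_i)}$ itself lies in $\mathsf{O}^{H(P)}(A_i)$ because, picking any $y\in A_i$ and (by atomicity) an atom $a\leq y$, one has $a\sqsubseteq y$ and $a\sqsubseteq f_{\mathsf{O}(A_i)}$; since $\mathsf{O}^{H(P)}(A_i)$ is upward closed this yields $\mathsf{O}^{H(P)}(f_{\mathsf{O}(A_i)})\subseteq\mathsf{O}^{H(P)}(A_i)$. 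For $\supseteq$: if $z$ overlaps some $y\in A_i$ in $H(P)$, choose $w\in P$ with $w\sqsubseteq z$ and $w\leq y$ and then an atom $a\leq w$; then $a\sqsubseteq z$, and as $a$ is an atom below $y\in A_i$ also $a\sqsubseteq f_{\mathsf{O}(A_i)}$, so $z$ overlaps $f_{\mathsf{O}(A_i)}$ in $H(P)$.

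So the main obstacle is exactly this identity $\mathsf{O}^{H(P)}(f_{\mathsf{O}(A_i)})=\mathsf{O}^{H(P)}(A_i)$: in contrast to the corresponding line for $G(P)$ it needs a genuine argument, and that argument rests essentially on atomicity of $P$. Everything afterwards — the union computation and the passage between ``sharing fusions'' and equality of overlap-sets — is routine and mirrors Lemma~\ref{sharefusions}.
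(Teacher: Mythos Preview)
Your proposal is correct and follows essentially the same route as the paper's proof: reduce to Lemma~\ref{sharefusions} and isolate the one genuinely new step, namely the identity $\mathsf{O}^{H(P)}(f_{\mathsf{O}(A_i)})=\mathsf{O}^{H(P)}(A_i)$, which you establish by passing through atoms. The paper phrases the needed hypothesis as ``$\downarrow A_i$ is atomic'' rather than atomicity of all of $P$---this is the precise assumption supplied by the subsequent theorem, since each $A_i$ has no fusion in $P$---but your argument in fact only ever selects atoms below members of $A_i$, so the difference is cosmetic.
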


\begin{proof}
The proof is essentially the same as the one of Lemma  \ref{sharefusions}. The only remarkable difference is that in this case $\mathsf{O}^{H(P)}(A_i)=\mathsf{O}^{H(P)}(f_{\mathsf{O}(A_i)})$ follows from the fact that $\downarrow A_i$ is atomic.
\end{proof}

\begin{Theorem} Let $P$ be a poset such that, for any nonempty subset $A$ of $P$ without a fusion in $P$, $\downarrow\!A$ is atomic. Then the inclusion map $\alpha\colon P\hookrightarrow H(P)$ is a fusion-completion of $P$.
\end{Theorem}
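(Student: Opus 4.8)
The plan is to mirror the proof of Theorem \ref{completionG(P)}, checking the same four conditions for $\alpha\colon P\hookrightarrow H(P)$: that $\alpha$ is fusion-preserving, fusion-dense, $\circ$-reflecting, an order-embedding, and that $H(P)$ is fusion-complete. The only place where the weaker order relation $\sqsubseteq$ matters is in the computation of overlap sets $\mathsf{O}^{H(P)}(-)$; everything else carries over verbatim, so the core task is to verify that under the hypothesis ``$\downarrow\!A$ atomic for every fusion-less nonempty $A$'' the overlap sets in $H(P)$ behave exactly as they did in $G(P)$.

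First I would record the key identity: for any nonempty $A\subseteq P$ without a fusion in $P$, one has $\mathsf{O}^{H(P)}(f_{\mathsf{O}(A)})=\mathsf{O}^{H(P)}(A)$. The inclusion $\supseteq$ is immediate since every atom below some $y\in A$ lies $\sqsubseteq f_{\mathsf{O}(A)}$, hence $f_{\mathsf{O}(A)}$ overlaps everything that $A$ does through that atom. For $\subseteq$, anything overlapping $f_{\mathsf{O}(A)}$ overlaps it via some atom $x\sqsubseteq f_{\mathsf{O}(A)}$ with $x\leq y\in A$; but here is where atomicity of $\downarrow\!A$ is used. If $z\sqsubseteq f_{\mathsf{O}(A)}$ with $z\in P$ non-atomic --- wait, by definition of $\sqsubseteq$ only atoms sit below $f_{\mathsf{O}(A)}$ among elements of $P$, so actually the subtlety is the reverse: I must show that any element of $P$ that overlaps (in $P$) some $y\in A$ still overlaps $f_{\mathsf{O}(A)}$ in $H(P)$, which needs an atom of $\downarrow\!A$ sitting below the common lower bound. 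Precisely, if $t\leq x$ and $t\leq y\in A$, then $t\in\downarrow\!A$, which is atomic, so there is an atom $a\leq t$; then $a\in\operatorname{Atoms}(P)$ with $a\leq y\in A$, so $a\sqsubseteq f_{\mathsf{O}(A)}$, and $a\leq t\leq x$ gives $x\circ f_{\mathsf{O}(A)}$. This is exactly the content flagged in the proof of Lemma \ref{sharefusionsAtomic}, namely $\mathsf{O}^{H(P)}(A_i)=\mathsf{O}^{H(P)}(f_{\mathsf{O}(A_i)})$ holds because $\downarrow\!A_i$ is atomic, and it is the crux of the whole argument.

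Granted that identity, the rest follows the template of Theorem \ref{completionG(P)}. For \emph{fusion-preserving}, if $x\in P$ is a fusion of $A$ in $P$, then $\downarrow^{H(P)}\!x=\downarrow^P\!x$ and $\downarrow^P\!A=\downarrow^{H(P)}\!A$ (no new elements of $H(P)$ lie below anything in $P$), so $\mathsf{O}^{H(P)}(x)=\uparrow^{H(P)}\!\downarrow^P\!x=\uparrow^{H(P)}\!\downarrow^P\!A=\mathsf{O}^{H(P)}(A)$, whence $x$ is a fusion of $A$ in $H(P)$. For \emph{fusion-dense}, each $p\in P$ is a fusion of $\{p\}$, and for nonempty fusion-less $A\subseteq P$ the element $f_{\mathsf{O}(A)}$ is a fusion of $A$ in $H(P)$ by the key identity together with $\mathsf{O}^{H(P)}(f_{\mathsf{O}(A)})=\uparrow^{H(P)}\!\downarrow^{H(P)}\!f_{\mathsf{O}(A)}=\uparrow^{H(P)}\!\downarrow^{H(P)}\!A$. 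For \emph{fusion-completeness}, write an arbitrary nonempty $A\subseteq H(P)$ as $\{f_{\mathsf{O}(A_i)}\}_{i\in I}\cup B$ and set $C=\tbigcup_{i\in I}A_i\cup B$; Lemma \ref{sharefusionsAtomic} gives $\mathsf{O}^{H(P)}(A)=\mathsf{O}^{H(P)}(C)$, so $A$ and $C$ share fusions in $H(P)$, and $C$ either has a fusion in $P$ (which is then a fusion in $H(P)$ since $\alpha$ is fusion-preserving) or lacks one in $P$, in which case $f_{\mathsf{O}(C)}$ is a fusion of $C$, hence of $A$, in $H(P)$. Finally, $\alpha$ is $\circ$-reflecting because, by construction of $\sqsubseteq$, for $p\in P$ and $x\in H(P)$, $x\sqsubseteq p$ forces $x\in P$, so a common lower bound in $H(P)$ of two elements of $P$ already lies in $P$; and $\alpha$ is trivially an order-embedding since $\sqsubseteq$ restricted to $P$ is $\leq$.

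The main obstacle is genuinely the $\subseteq$ direction of the overlap identity $\mathsf{O}^{H(P)}(f_{\mathsf{O}(A)})=\mathsf{O}^{H(P)}(A)$: because $\sqsubseteq$ only puts \emph{atoms} of $P$ below the new points, one must ensure no overlap is lost, and this is precisely where the atomicity hypothesis on $\downarrow\!A$ is indispensable --- without it some $y\in A$ might overlap $x$ only through a non-atomic $t$, and no atom would witness $x\circ f_{\mathsf{O}(A)}$. Once this is isolated, everything else is a routine transcription of the $G(P)$ argument.
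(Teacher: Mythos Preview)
Your proof is correct and follows essentially the same approach as the paper's: both mirror the $G(P)$ argument, with the crucial new ingredient being the identity $\mathsf{O}^{H(P)}(f_{\mathsf{O}(A)})=\mathsf{O}^{H(P)}(A)$, and both use atomicity of $\downarrow\!A$ exactly where you do---to produce an atom below a common lower bound so that overlap with elements of $A$ is witnessed by an atom under $f_{\mathsf{O}(A)}$. Your self-correction about which inclusion needs atomicity lands in the right place; the paper records the same point more tersely as ``if $x\circ y$ for some $y\in A$ then there is an atom $z$ such that $z\leq x$ and $z\leq y$.''
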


\begin{proof}The fact that $\alpha$ is fusion-preserving can be shown as in the proof of Theorem \ref{Ffuscomp}.  As in the previous fusion-completions, in order to check that $\alpha$ is fusion-dense, we only need to show that any nonempty subset $A\subseteq P$ that has no fusions in $P$ has $f_{\mathsf{O}(A)} $ as a fusion in $H(P)$. It is easily seen that
\[
\begin{aligned}
\mathsf{O}^{H(P)}(f_{\mathsf{O}(A)})&=\tbigcup\{\uparrow^{H(P)}\!\downarrow^{H(P)}\! x\mid x\sqsubseteq f_{\mathsf{O}(A)}\}\\
&=\tbigcup\{\uparrow^{H(P)}\!\downarrow^{H(P)}\! x\mid x\in \operatorname{Atoms}(P) \text{ and } x\sqsubseteq f_{\mathsf{O}(A)}\}\\
&=\tbigcup\{\uparrow^{H(P)}\!\downarrow^{H(P)}\! x\mid  x\in \operatorname{Atoms}(P) \text{ and }  \mathsf{O}^P(x)\subseteq\mathsf{O}^{P}(A)\}\\
&= \mathsf{O}^{H(P)}(A).
\end{aligned}
\]
The last step follows from the fact that if $x\circ y$ for some $y\in A$ then there is an atom $z$ such that $z\leq x$ and $z\leq y$. Therefore $f_{\mathsf{O}(A)}$ is a fusion of $A$ in $H(P)$. It is obvious that  $\alpha$ is $\circ$-reflecting and an order-embedding.

Accordingly, it only remains to verify that $H(P)$ is fusion-complete. Let $\varnothing\neq A\subseteq H(P)$. Without loss of generality, we will assume that  $A$ is of the form
\[
A=\{f_{\mathsf{O}(A_i)}\}_{i\in I}\cup B
\]
with $\varnothing\neq A_i\subseteq P$ for each $i\in I$ and $B\subseteq P$, where either $I$ or $B$ might be empty, but not both. Let
\[
C=\tbigcup_{i\in P}A_i\cup B.
\]
By Lemma \ref{sharefusionsAtomic} $A$ and $C$ share fusions.
 Then either $C$ has a fusion $x$ in $P$ or it does not  have any. If it does, then $x$ is a fusion of $C$ in $H(P)$, as $\alpha$ preserves fusions. In consequence, $x$ is a fusion of $A$. If $C$ has no fusions $P$, then we know that $f_{\mathsf{O}(C)}$ is a fusion of $C$ in $H(P)$. Hence so is of $A$.
\end{proof}

\begin{Proposition}\label{fewfusionsAtomic}
Let $x$, $A$ and $C$ be as in Lemma \ref{sharefusionsAtomic}
Then either $x$ is a fusion of $C$ in $P$ or $x=f_{\mathsf{O}(C)}$.
\end{Proposition}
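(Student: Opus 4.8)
The plan is to run, almost verbatim, the argument of Proposition~\ref{fewfusions}, with $H(P)$ in place of $G(P)$ and $\alpha$ in place of $\delta$; indeed this proposition is the atomic analogue of Proposition~\ref{fewfusions}. The ingredients I would assemble first are: (a) by Lemma~\ref{sharefusionsAtomic}, $A$ and $C$ share fusions in $H(P)$, so the hypothesis that $x$ is a fusion of $A$ gives $x$ a fusion of $C$ in $H(P)$, i.e.\ $\mathsf{O}^{H(P)}(x)=\mathsf{O}^{H(P)}(C)$; and (b) the structural fact, already used in the preceding theorem to see that $\alpha$ is an $\circ$-reflecting order-embedding, that $\downarrow^{H(P)}\!p=\downarrow^{P}\!p$ for every $p\in P$ (the new points $f_{\mathsf{O}(A)}$ are below nothing and have only atoms of $P$ below them). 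From (b) it follows that $\mathsf{O}^{H(P)}(D)\cap P=\mathsf{O}^{P}(D)$ for every $D\subseteq P$; this is also the reason $\alpha$ is fusion-reflecting.

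With (a) and (b) in hand I would split into two cases. If $x\in P$, then intersecting the identity from (a) with $P$ gives
\[
\mathsf{O}^{P}(x)=\mathsf{O}^{H(P)}(x)\cap P=\mathsf{O}^{H(P)}(C)\cap P=\mathsf{O}^{P}(C),
\]
so $x$ is a fusion of $C$ in $P$. If $x\notin P$, then $x=f_{\mathsf{O}(D)}$ for some nonempty $D\subseteq P$ without a fusion in $P$, and from the proof that $\alpha$ is fusion-dense we know $f_{\mathsf{O}(D)}$ is a fusion of $D$ in $H(P)$; hence $\mathsf{O}^{H(P)}(D)=\mathsf{O}^{H(P)}(x)=\mathsf{O}^{H(P)}(C)$, and intersecting with $P$ yields $\mathsf{O}^{P}(D)=\mathsf{O}^{P}(C)$, i.e.\ $\mathsf{O}(D)=\mathsf{O}(C)$, so $x=f_{\mathsf{O}(D)}=f_{\mathsf{O}(C)}$. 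As a byproduct this also shows $C$ itself has no fusion in $P$, so that $f_{\mathsf{O}(C)}$ is genuinely an element of $H(P)$.

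I do not anticipate a real obstacle: this is bookkeeping with the operator $\mathsf{O}(-)$ of the same flavour as in Theorems~\ref{Ffuscomp} and~\ref{completionG(P)}. The only place asking for a moment's care is the passage $\mathsf{O}^{H(P)}(D)\cap P=\mathsf{O}^{P}(D)$, which must be justified from the definition of the order on $H(P)$ --- specifically from the fact that no point outside $P$ can sit below an element of $P$, so no such point can witness an overlap of two elements of $P$. Once that is noted, both cases close immediately.
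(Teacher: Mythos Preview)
Your proposal is correct and follows exactly the route the paper takes: its proof says only that it is ``essentially the same as the one of Proposition~\ref{fewfusions}'', and you have spelled out precisely that argument with $H(P)$ and $\alpha$ in place of $G(P)$ and $\delta$, including the two-case split and the passage $\mathsf{O}^{H(P)}(D)\cap P=\mathsf{O}^{P}(D)$. The only addition you make---noting that $C$ has no fusion in $P$ so that $f_{\mathsf{O}(C)}$ really lies in $H(P)$---is a harmless clarification.
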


\begin{proof}
The proof is essentially the same as the one of Proposition \ref{fewfusions}.
\end{proof}

\begin{Theorem}
 Let $P$ be a poset such that, for any nonempty subset $A$ of $P$ without a fusion in $P$, $\downarrow\!A$ is atomic, and let $e\colon P\to M$ be a dense fusion-completion. Then there is an injective fusion- and order-preserving map $g\colon H(P)\to M$ such that
\[
\xymatrix{
P\ar[r]^e\ar[dr]_\alpha&M\\
&H(P)\ar@{.>}[u]_g
}
\]
commutes.
\end{Theorem}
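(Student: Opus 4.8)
The plan is to force commutativity by setting $g|_P=e$, and to send each new element $f_{\mathsf{O}(A)}$ of $H(P)$ to a fusion in $M$ of $e(\operatorname{Atoms}(\downarrow\!A))$. First I would record that for an atom $x$ of $P$ one has $x\leq a$ iff $x\circ a$, so $\operatorname{Atoms}(\downarrow\!A)=\operatorname{Atoms}(P)\cap\mathsf{O}(A)$ depends only on $\mathsf{O}(A)$; moreover this set is nonempty, since $A\neq\varnothing$ and, by hypothesis, $\downarrow\!A$ is atomic. Hence, by fusion-completeness of $M$, it has a fusion in $M$, and choosing one such fusion for each set of the form $\mathsf{O}(A)$ (with $A$ lacking a fusion in $P$) yields a well-defined map $g\colon H(P)\to M$ with $g\circ\alpha=e$.

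The heart of the argument is to show that $g(f_{\mathsf{O}(A)})$ is at once a fusion of $e(A)$ in $M$ and an upper bound of $e(\operatorname{Atoms}(\downarrow\!A))$. For the fusion claim it suffices to verify $\mathsf{O}^M(e(\operatorname{Atoms}(\downarrow\!A)))=\mathsf{O}^M(e(A))$: the inclusion $\subseteq$ is monotonicity of $e$, while for $\supseteq$ one takes $y$ overlapping some $e(a)$ with $a\in A$, picks a common lower bound $t$ in $M$, uses density of $e$ to find $p\in P$ with $e(p)\leq t$, observes $p\in\downarrow\!A$, and (by atomicity of $\downarrow\!A$) selects an atom $z\leq p$, which gives $e(z)\leq y$. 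For the upper-bound claim: given $z\in\operatorname{Atoms}(\downarrow\!A)$, since $e(z)$ overlaps $g(f_{\mathsf{O}(A)})$ in $M$, density of $e$ produces $p\in P$ with $e(p)\leq e(z)$, forcing $p=z$ as $z$ is an atom, whence $e(z)\leq g(f_{\mathsf{O}(A)})$. I expect this point to be the main obstacle, and it is precisely where the atomicity hypothesis is essential: in a non-separative $M$ a fusion of a set need not bound that set, so without atoms one could not guarantee order-preservation of $g$.

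Granting these two properties, order-preservation of $g$ is immediate: the only nontrivial instance of $u\sqsubseteq v$ in $H(P)$ is $u=x\in\operatorname{Atoms}(\downarrow\!A)$, $v=f_{\mathsf{O}(A)}$, and then $g(x)=e(x)\leq g(f_{\mathsf{O}(A)})$ by the upper-bound property. For fusion-preservation I would appeal to Lemma~\ref{sharefusionsAtomic} and Proposition~\ref{fewfusionsAtomic}: if $x$ is a fusion in $H(P)$ of $A=\{f_{\mathsf{O}(A_i)}\}_{i\in I}\cup B$, then $x$ is a fusion of $C=\bigcup_{i}A_i\cup B$, hence either a fusion of $C$ in $P$---so that $g(x)=e(x)$ is a fusion of $e(C)$ because $e$ is fusion-preserving---or $x=f_{\mathsf{O}(C)}$, so that $g(x)$ is a fusion of $e(C)$ by the first property above. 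Either way $\mathsf{O}^M(g(x))=\mathsf{O}^M(e(C))=\bigcup_{i}\mathsf{O}^M(e(A_i))\cup\mathsf{O}^M(e(B))=\mathsf{O}^M(g(A))$, so $g(x)$ is a fusion of $g(A)$.

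Finally I would treat injectivity in three cases. On $P$ it is injectivity of $e$. There is no $p\in P$ with $e(p)=g(f_{\mathsf{O}(A)})$, for then $e(p)$ would be a fusion of $e(A)$ in $M$ and, $e$ being fusion-reflecting (it is a $\circ$-reflecting order-embedding), $p$ would be a fusion of $A$ in $P$, contradicting the choice of $A$. And if $g(f_{\mathsf{O}(A)})=g(f_{\mathsf{O}(B)})$, this common element is a fusion of both $e(A)$ and $e(B)$, so $\mathsf{O}^M(e(A))=\mathsf{O}^M(e(B))$; pushing this equality back along $\circ$-reflectivity of $e$ yields $\mathsf{O}^P(A)=\mathsf{O}^P(B)$, i.e. $f_{\mathsf{O}(A)}=f_{\mathsf{O}(B)}$. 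Collecting the pieces gives the required injective, order- and fusion-preserving $g$ making the triangle commute.
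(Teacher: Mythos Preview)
Your proof is correct and follows essentially the same strategy as the paper: define $g$ on new elements by choosing a fusion in $M$ of the $e$-image of a suitable subset of $P$, then verify order-preservation via the atomicity hypothesis, fusion-preservation via Lemma~\ref{sharefusionsAtomic} and Proposition~\ref{fewfusionsAtomic}, and injectivity via $\circ$-reflection. The paper uses $\tilde{A}=\{x\mid\mathsf{O}(x)\subseteq\mathsf{O}(A)\}$ in place of your $\operatorname{Atoms}(\downarrow A)$, but these sets share fusions in $M$ (both have the same overlap set as $A$), so the constructions coincide; your injectivity argument is actually more thorough, since the paper omits the mixed case $e(p)=g(f_{\mathsf{O}(A)})$.
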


\begin{proof}
Let $g$ map $x$ to $e(x)$ for each  $x\in P$. For each nonempty subset $A$ of $P$ with no fusions in $P$, let 
\[
\tilde{A}=\{x\mid \mathsf{O}(x)\subseteq \mathsf{O}(A)\}.
\]
Clearly $\mathsf{O}(\tilde{A})=\mathsf{O}(A)$. Therefore $\tilde{A}$ has no fusion in $P$.  Pick a fusion $x$ of $e(\tilde{A})$ and let $g(f_{\mathsf{O}(A)})=x$ (note that for this we need to invoke the Axiom of Choice). First we check that $g$ is injective. Let $A$ and $B$ be nonempty subsets of $P$ without fusions such that $\tilde{A}\not\subseteq \tilde{B}$. Then there exists $x\in P$ such that $\mathsf{O}(x)\subseteq \mathsf{O}(A)$ while $\mathsf{O}(x)\not\subseteq\mathsf{O}(B)$. Accordingly, there exists $y\in P$ such that $x\circ y$ and $y\wr z$ for all $z\in B$. Clearly $y\wr z$ for all $z\in \tilde{B}$. As $e$ is $\circ$ -reflecting, $e(y)\not\in\mathsf{O}(e(\tilde{B}))$. As $e$ is $\circ$-preserving and $x\in \mathsf{O}(A)$, 
\[
e(x)\in\mathsf{O}(e(A))\subseteq \mathsf{O}(e(\tilde{A})).
\]
Thus $\mathsf{O}(e(\tilde{A}))\neq\mathsf{O}(e(\tilde{B}))$ and consequently $e(\tilde{A})$ and $e(\tilde{B})$ have no common fusions.

In order to show that $g$ is order-preserving, we only need to check that for any $x\in P$ and $A\subseteq P$, $g(x)\leq g(f_{\mathsf{O}(A)})$ whenever $x\sqsubseteq f_{\mathsf{O}(A)}$. Let $y=g(f_{\mathsf{O}(A)})$ and  $x\sqsubseteq f_{\mathsf{O}(A)}$. We have that $\mathsf{O}(x)\subseteq\mathsf{O}(A)$ and, as $y$ is a fusion of $e(\tilde{A})$, that $\mathsf{O}(y)=\mathsf{O}(e(\tilde{A}))$. Therefore $y\circ e(x)$. As $x$ is an atom in $P$ and $e$ is dense, $e(x)$ is an atom in $M$. In conclusion $g(x)=e(x)\leq y$. In conclusion $g$ is monotone.

Finally, to check that $g$ is fusion-preserving, let $A\subseteq H(P)$ and let $x\in H(P)$ be a fusion of $A$. Without loss of generality we will assume again that
\[
A=\{f_{\mathsf{O}(A_i)}\}_{i\in I}\cup B
\]
with $\varnothing\neq A_i\subseteq P$ for each $i\in I$ and $B\subseteq P$, where either $I$ or $B$ might be empty, but not both, and let
\[
C=\tbigcup_{i\in I}A_i\cup B.
\]
By Lemma \ref{sharefusionsAtomic}, $\mathsf{O}^{H(P)}(A)=\mathsf{O}^{H(P)}(C)$. Furthermore as $g(f_{\mathsf{O}(A_i)})$ is, by definition of $g$, a fusion of $e(A_i)$, one has that
\[
\mathsf{O}^M(e(A_i))=\mathsf{O}^M(g(f_{\mathsf{O}(A_i)})).
\]
Therefore one has
\[
\begin{aligned}
\mathsf{O}^M(g(A))&=\tbigcup_{i\in I}\mathsf{O}^M(g(f_{\mathsf{O}(A_i)})\cup\mathsf{O}^M(g(B))\\
&=\tbigcup_{i\in I}\mathsf{O}^M(e(A_i))\cup \mathsf{O}^M e(B)\\
&=\mathsf{O}^M\left(\tbigcup_{i\in I}e(A_i)\cup e(B)\right)\\
&=\mathsf{O}^M(e(C))=\mathsf{O}^M(g(C)).
\end{aligned}
\]
Hence $g(A)$ and $g(C)$ share fusions in $M$.

If $x\in P$, then $x$ is also a fusion of $C$ in $P$, since $\alpha$ is fusion-reflecting. In consequence, $g(x)$ is a fusion of $g(C)$ in $M$ and we conclude that $g(x)$ is a fusion of $g(A)$ in $M$. If $x\not\in P$, then $x=f_{\mathsf{O}(C)}$ by Proposition \ref{fewfusionsAtomic}. Accordingly, by the definition of $g$, we have that $g(x)$ is a fusion of $g(C)$ and hence of $g(A)$.
\end{proof}

\begin{remark}
Note that requiring a fusion-completion map to be dense is quite reasonable and could have even been added as a requirement in our definition. Admittedly fusions in Core Mereology do not have to be bigger than the parts they fuse. However not requiring for density allows for odd completions like the fusion-completion of the two element antichain shown in Figure \ref{nodense}.

\begin{figure}[tb]
\[
\xymatrix@C=2em{
&& \ar@/^/@{~>}[rr]&& &\ast\ar@{--}[ddl]\ar@{--}[ddr]&\\
\bullet&&\bullet&&\bullet\ar@{--}[d]&&\bullet\ar@{--}[d]\\
&& && \ast&&\ast
}
\]
\caption{}
\label{nodense}
\end{figure}

\end{remark}

%

We finish this section concluding that Atomic Core Mereology has, in a sense, a least dense fusion-completion.

\begin{Corollary}
If $P$ is a model of Atomic Core Mereology, then $\alpha\colon P\to H(P)$ is a fusion-completion of $P$ and for any other dense fusion-completion $e\colon P\to M$ there is an injective fusion- and order-preserving map $g\colon H(P)\to M$ such that 
\[
\xymatrix{
P\ar[r]^e\ar[dr]_\alpha&M\\
&H(P)\ar@{.>}[u]_g
}
\]
commutes.
\end{Corollary}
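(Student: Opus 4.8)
The plan is to recognize this Corollary as an immediate specialization of the two preceding theorems of this section, the only real work being to verify that their common hypothesis---namely, that $\downarrow\!A$ is atomic for every nonempty $A\subseteq P$ lacking a fusion in $P$---holds automatically once $P$ is a model of Atomic Core Mereology.

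First I would translate ``model of Atomic Core Mereology'' into the poset vocabulary of \S2: $P$ is a poset satisfying Atomicity (A), i.e.\ every element of $P$ has an atom of $P$ below it. Next I would check that atomicity passes to down-sets. Given a nonempty $A\subseteq P$ and any $z\in\downarrow\!A$, choose $a\in A$ with $z\leq a$; by (A) there is an atom $w$ of $P$ with $w\leq z$, and then $w\leq z\leq a$ gives $w\in\downarrow\!A$. Since the only element of $P$ below $w$ is $w$ itself, and the order on $\downarrow\!A$ is merely the restriction of $\leq$, $w$ is also an atom of the subposet $\downarrow\!A$. Hence $\downarrow\!A$ is atomic, and this holds for \emph{every} nonempty $A\subseteq P$, in particular for those without a fusion in $P$.

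With that observation in hand, I would simply invoke the two earlier theorems: the first gives that $\alpha\colon P\hookrightarrow H(P)$ is a fusion-completion of $P$, and the second gives, for any dense fusion-completion $e\colon P\to M$, an injective fusion- and order-preserving map $g\colon H(P)\to M$ making the triangle in the statement commute. Concatenating these two conclusions is precisely the Corollary.

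I do not anticipate any genuine obstacle here; the argument is essentially bookkeeping. The one point that deserves a moment's care is the identification of atoms of $P$ with atoms of the induced subposet $\downarrow\!A$ on the elements that lie in $\downarrow\!A$, which is harmless because the induced order is the restriction of $\leq$. Everything else is a direct quotation of the preceding results.
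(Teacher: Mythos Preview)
Your proposal is correct and matches the paper's treatment: the paper gives no explicit proof of this Corollary, treating it as immediate from the two preceding theorems once one notes that Atomicity in $P$ forces every down-set $\downarrow\!A$ to be atomic. Your verification of that one point is exactly the missing observation, and the rest is, as you say, a direct quotation of the earlier results.
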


\section{Join-completions}

In this paper we have been mainly focused on the two classical definitions for mereological compositions, namely, sums and fusions. However we do not want to finish this work without turning our attention to another notion of mereological composition which have been considered in the past \cite{Grzegorczyk1955} and which is lately becoming more commonly used (see \cite{Cotnoir2019a} or \cite{Hovda2009a} for example): \emph{minimal upper bounds}. Their relation to sums and fusions have fairly well examined in \cite{GruszczynskiPietruszczak2014} by Gruszczy\'nski and Pietruszczak and in \cite{PIETRUSZCZAK2018a, PIETRUSZCZAK2020} by Pietruszczak. If we assume antisymmetry, minimal upper bounds are unique and therefore we can refer to them as joins or suprema. The corresponding mereological completions, namely, join-completions, were introduced and studied by Banaschewski in \cite{Banaschewski1956}. These completions are tightly related to representations of complete lattices that were systematically studied by B\"uchi in \cite{Buchi1952}.

Join-completions are always join- and $\circ$-reflecting. Accordingly, from a mereological point of view, we are interested in join-completions that are also join-preserving (since we are interested in completions that preserve existing compositions). Fortunately there is a well-known join-completion that satisfies all that we may want. The smallest join-completion of a poset $P$ is the so called \emph{Dedekind-MacNeille} completion $\mathcal{N}(P)$ (see \cite{Steinberg2010}). This completion embeds canonically into any other join-completion of $P$ and the embedding of $P$ into $\mathcal{N}(P)$ preserves all existing joins (in fact, it actually also preserves all existing meets or infima). Admittedly $\mathcal{N}(P)$ has always a bottom element. Either the bottom element of $P$, if it had it, or the join that corresponds to the empty set. If a classical mereologist does not feel comfortable with it, we can simply drop it and the construction still works effectively.

We finish this section by briefly focusing on separative join-completions. The situation is quite different from what we have seen in \S\ref{sscomps} for the case of fusion- and sum-completions. The following example shows that there is not a unique separative join-completion in the general case.

\begin{Example} Let $\mathcal{S}(\R)$ be the family of  singletons of the real line $\R$,
\[
\mathcal{S}(\R)=\left\{\{x\}\mid x\in\R\right\},
\]
let $\mathcal{P}(\R)$ be the powerset of $\R$ and let $\mathcal{C}(\R)$ be the family of closed subsets of $\R$ (considering $\R$ endowed with the usual Euclidean topology). Both $\mathcal{P}(\R)$  and $\mathcal{C}(\R)$ ordered by inclusion are complete lattices in which $\mathcal{S}(\R)$ embeds. Obviously, these embeddings preserve all existing joins as $\mathcal{S}(\R)$ contains only trivial joins. Furthermore note that the three posets $\mathcal{S}(\R), \mathcal{P}(\R)\setminus\{\varnothing\}$  and $\mathcal{C}(\R)\setminus\{\varnothing\}$ are separative. Therefore $\mathcal{S}(\R)$ is a separative poset that has two nonisomorphic separative join-preserving join-completions:
\[
\xymatrix{
&\mathcal{S}(\R)\ar@{_{(}->}[dl]\ar@{^{(}->}[dr]&\\
\mathcal{C}(\R)\setminus\varnothing&\not\simeq&\mathcal{P}(\R)\setminus\varnothing
}
\]
Indeed, while $\mathcal{P}(\R)$ is a complete Boolean algebra, $\mathcal{C}(\R)$ is a complete lattice but fails to be Boolean. In fact, in $\mathcal{C}(\R)$ the only complemented elements are the top and the bottom elements. Finally note that although $\mathcal{C}(\R)$ canonically embeds into $\mathcal{P}(\R)$, this embedding does not preserve joins, as suprema in $\mathcal{C}(\R)$ are given by the closure of the union.

\end{Example}

%


%
%

\end{document}